\newcommand{\nc}{\newcommand}
\newcommand{\delete}[1]{}
\nc{\mfootnote}[1]{\footnote{#1}} % Use this to show footnotes
\nc{\todo}[1]{\tred{To do:} #1}
\nc{\mlabel}[1]{\label{#1}}  % Use this to suppress names
\nc{\mcite}[1]{\cite{#1}}  % Use this to suppress names
\nc{\mref}[1]{\ref{#1}}  % Use this to suppress names
\nc{\mbibitem}[1]{\bibitem{#1}} % Use this to show number
\nc{\mlabel}[1]{\label{#1}  % Use the next two lines to show names
{\hfill \hspace{1cm}{\bf{{\ }\hfill(#1)}}}}
\nc{\mcite}[1]{\cite{#1}{{\bf{{\ }(#1)}}}}  % Use this lines to show names
\nc{\mref}[1]{\ref{#1}{{\bf{{\ }(#1)}}}}  % Use this lines to show names
\nc{\mbibitem}[1]{\bibitem[\bf #1]{#1}} % Use this to show name
\newtheorem{theorem}{Theorem}[section]
\newtheorem{definition}[theorem]{Definition}
\newtheorem{lemma}[theorem]{Lemma}
\newtheorem{prop-def}[theorem]{Proposition-Definition}
\newtheorem{remark}[theorem]{Remark}
\nc{\tred}[1]{\textcolor{red}{#1}}
\nc{\tblue}[1]{\textcolor{blue}{#1}}
\nc{\tgreen}[1]{\textcolor{green}{#1}}
\nc{\tpurple}[1]{\textcolor{purple}{#1}}
\nc{\btred}[1]{\textcolor{red}{\bf #1}}
\nc{\btblue}[1]{\textcolor{blue}{\bf #1}}
\nc{\btgreen}[1]{\textcolor{green}{\bf #1}}
\nc{\btpurple}[1]{\textcolor{purple}{\bf #1}}
\nc{\li}[1]{\textcolor{red}{Xiaomin:#1}}
\nc{\cm}[1]{\textcolor{blue}{Chengming: #1}}
\nc{\twovec}[2]{\left(\begin{array}{c} #1 \\ #2\end{array} \right )}
\nc{\threevec}[3]{\left(\begin{array}{c} #1 \\ #2 \\ #3 \end{array}\right )}
\nc{\twomatrix}[4]{\left(\begin{array}{cc} #1 & #2\\ #3 & #4 \end{array} \right)}
\nc{\threematrix}[9]{{\left(\begin{matrix} #1 & #2 & #3\\ #4 & #5 & #6 \\ #7 & #8 & #9 \end{matrix} \right)}}
\nc{\twodet}[4]{\left|\begin{array}{cc} #1 & #2\\ #3 & #4 \end{array} \right|}
\nc{\rk}{\mathrm{r}}
\nc{\gensp}{V} % space of generators
\nc{\relsp}{\Lambda} %space of relations
\nc{\leafsp}{X}    %decoration space for leaves
\nc{\treesp}{\overline{\calt}} % space of labeled trees
\nc{\vin}{{\mathrm Vin}}    %decoration set of indices
\nc{\lin}{{\mathrm Lin}}    %decoration set of leaves
\nc{\gop}{{\,\omega\,}}     % generic binary operation
\nc{\gopb}{{\,\nu\,}}
\nc{\svec}[2]{{\tiny\left(\begin{matrix}#1\\
#2\end{matrix}\right)\,}}  % column vector
\nc{\ssvec}[2]{{\tiny\left(\begin{matrix}#1\\
#2\end{matrix}\right)\,}} % subscript column vector
\nc{\su}{\mathrm{Su}}
\nc{\tsu}{\mathrm{TSu}}
\nc{\TSu}{\mathrm{TSu}}
\nc{\eval}[1]{{#1}_{\big|D}}
\nc{\oto}{\leftrightarrow}
\nc{\oaset}{\mathbf{O}^{\rm alg}}
\nc{\omset}{\mathbf{O}^{\rm mod}}
\nc{\oamap}{\Phi^{\rm alg}}
\nc{\ommap}{\Phi^{\rm mod}}
\nc{\ioaset}{\mathbf{IO}^{\rm alg}}
\nc{\iomset}{\mathbf{IO}^{\rm mod}}
\nc{\ioamap}{\Psi^{\rm alg}}
\nc{\iommap}{\Psi^{\rm mod}}
\nc{\suc}{{successor}\xspace} \nc{\Suc}{{Successor}\xspace}
\nc{\sucs}{{successors}\xspace} \nc{\Sucs}{{Successors}\xspace}
\nc{\Tsuc}{{T-successor}\xspace}
\nc{\Tsucs}{{T-successors}\xspace} \nc{\Lsuc}{{L-successor}\xspace}
\nc{\Lsucs}{{L-successors}\xspace} \nc{\Rsuc}{{R-successor}\xspace}
\nc{\Rsucs}{{R-successors}\xspace}
\nc{\bia}{{$\mathcal{P}$-bimodule ${\bf k}$-algebra}\xspace}
\nc{\bias}{{$\mathcal{P}$-bimodule ${\bf k}$-algebras}\xspace}
\nc{\rmi}{{\mathrm{I}}}
\nc{\rmii}{{\mathrm{II}}}
\nc{\rmiii}{{\mathrm{III}}}
\nc{\pll}{\beta}
\nc{\plc}{\epsilon}
\nc{\ass}{{\mathit{Ass}}}
\nc{\lie}{{\mathit{Lie}}}
\nc{\comm}{{\mathit{Comm}}}
\nc{\dend}{{\mathit{Dend}}}
\nc{\zinb}{{\mathit{Zinb}}}
\nc{\tdend}{{\mathit{TDend}}}
\nc{\prelie}{{\mathit{preLie}}}
\nc{\postlie}{{\mathit{PostLie}}}
\nc{\quado}{{\mathit{Quad}}}
\nc{\octo}{{\mathit{Octo}}}
\nc{\ldend}{{\mathit{ldend}}}
\nc{\lquad}{{\mathit{LQuad}}}
 \nc{\adec}{\check{;}} \nc{\aop}{\alpha}
\nc{\dftimes}{\widetilde{\otimes}} \nc{\dfl}{\succ} \nc{\dfr}{\prec}
\nc{\dfc}{\circ} \nc{\dfb}{\bullet} \nc{\dft}{\star}
\nc{\dfcf}{{\mathbf k}} \nc{\apr}{\ast} \nc{\spr}{\cdot}
\nc{\twopr}{\circ} \nc{\tspr}{\star} \nc{\sempr}{\ast}
\nc{\disp}[1]{\displaystyle{#1}}
\nc{\bin}[2]{ (_{\stackrel{\scs{#1}}{\scs{#2}}})}  %binomial coeff
\nc{\binc}[2]{ \left (\!\! \begin{array}{c} \scs{#1}\\
    \scs{#2} \end{array}\!\! \right )}  %binomial coeff
\nc{\bincc}[2]{  \left ( {\scs{#1} \atop
    \vspace{-.5cm}\scs{#2}} \right )}  %binomial coeff
\nc{\sarray}[2]{\begin{array}{c}#1 \vspace{.1cm}\\ \hline
    \vspace{-.35cm} \\ #2 \end{array}}
\nc{\bs}{\bar{S}} \nc{\dcup}{\stackrel{\bullet}{\cup}}
\nc{\dbigcup}{\stackrel{\bullet}{\bigcup}} \nc{\etree}{\big |}
\nc{\la}{\longrightarrow} \nc{\fe}{\'{e}} \nc{\rar}{\rightarrow}
\nc{\dar}{\downarrow} \nc{\dap}[1]{\downarrow
\rlap{$\scriptstyle{#1}$}} \nc{\uap}[1]{\uparrow
\rlap{$\scriptstyle{#1}$}} \nc{\defeq}{\stackrel{\rm def}{=}}
\nc{\dis}[1]{\displaystyle{#1}} \nc{\dotcup}{\,
\displaystyle{\bigcup^\bullet}\ } \nc{\sdotcup}{\tiny{
\displaystyle{\bigcup^\bullet}\ }} \nc{\hcm}{\ \hat{,}\ }
\nc{\hcirc}{\hat{\circ}} \nc{\hts}{\hat{\shpr}}
\nc{\lts}{\stackrel{\leftarrow}{\shpr}}
\nc{\rts}{\stackrel{\rightarrow}{\shpr}} \nc{\lleft}{[}
\nc{\lright}{]} \nc{\uni}[1]{\tilde{#1}} \nc{\wor}[1]{\check{#1}}
\nc{\free}[1]{\bar{#1}} \nc{\den}[1]{\check{#1}} \nc{\lrpa}{\wr}
\nc{\curlyl}{\left \{ \begin{array}{c} {} \\ {} \end{array}
    \right .  \!\!\!\!\!\!\!}
\nc{\curlyr}{ \!\!\!\!\!\!\!
    \left . \begin{array}{c} {} \\ {} \end{array}
    \right \} }
\nc{\leaf}{\ell}       % number of leafs
\nc{\longmid}{\left | \begin{array}{c} {} \\ {} \end{array}
    \right . \!\!\!\!\!\!\!}
\nc{\ot}{\otimes} \nc{\sot}{{\scriptstyle{\ot}}}
\nc{\otm}{\overline{\ot}}
\nc{\ora}[1]{\stackrel{#1}{\rar}}
\nc{\ola}[1]{\stackrel{#1}{\la}}%${\Bbb Z}$
\nc{\pltree}{\calt^\pl}
\nc{\epltree}{\calt^{\pl,\NC}}
\nc{\rbpltree}{\calt^r}
\nc{\scs}[1]{\scriptstyle{#1}} \nc{\mrm}[1]{{\rm #1}}
\nc{\dirlim}{\displaystyle{\lim_{\longrightarrow}}\,}
\nc{\invlim}{\displaystyle{\lim_{\longleftarrow}}\,}
\nc{\mvp}{\vspace{0.5cm}} \nc{\svp}{\vspace{2cm}}
\nc{\vp}{\vspace{8cm}} \nc{\proofbegin}{\noindent{\bf Proof: }}
\nc{\proofend}{$\blacksquare$ \vspace{0.5cm}}
\nc{\freerbpl}{{F^{\mathrm RBPL}}}
\nc{\sha}{{\mbox{\cyr X}}}  %used to be \cyr
\nc{\ncsha}{{\mbox{\cyr X}^{\mathrm NC}}} \nc{\ncshao}{{\mbox{\cyr
X}^{\mathrm NC,\,0}}}
\nc{\shpr}{\diamond}    %Shuffle product
\nc{\shprm}{\overline{\diamond}}    %Shuffle product
\nc{\shpro}{\diamond^0}    %Shuffle product
\nc{\shprr}{\diamond^r}     %product on controlled trees
\nc{\shpra}{\overline{\diamond}^r}
\nc{\shpru}{\check{\diamond}} \nc{\catpr}{\diamond_l}
\nc{\rcatpr}{\diamond_r} \nc{\lapr}{\diamond_a}
\nc{\sqcupm}{\ot}
\nc{\lepr}{\diamond_e} \nc{\vep}{\varepsilon} \nc{\labs}{\mid\!}
\nc{\rabs}{\!\mid} \nc{\hsha}{\widehat{\sha}}
\nc{\lsha}{\stackrel{\leftarrow}{\sha}}
\nc{\rsha}{\stackrel{\rightarrow}{\sha}} \nc{\lc}{\lfloor}
\nc{\rc}{\rfloor}
\nc{\tpr}{\sqcup}
\nc{\nctpr}{\vee}
\nc{\plpr}{\star}
\nc{\rbplpr}{\bar{\plpr}}
\nc{\sqmon}[1]{\langle #1\rangle}
\nc{\forest}{\calf}
\nc{\altx}{\Lambda_X} \nc{\vecT}{\vec{T}} \nc{\onetree}{\bullet}
\nc{\Ao}{\check{A}}
\nc{\seta}{\underline{\Ao}}
\nc{\deltaa}{\overline{\delta}}
\nc{\trho}{\tilde{\rho}}
\nc{\rpr}{\circ}
\nc{\dpr}{{\tiny\diamond}}
\nc{\rprpm}{{\rpr}}
\nc{\mmbox}[1]{\mbox{\ #1\ }} \nc{\ann}{\mrm{ann}}
\nc{\Aut}{\mrm{Aut}} \nc{\can}{\mrm{can}}
\nc{\twoalg}{{two-sided algebra}\xspace}
\nc{\colim}{\mrm{colim}}
\nc{\Cont}{\mrm{Cont}} \nc{\rchar}{\mrm{char}}
\nc{\cok}{\mrm{coker}} \nc{\dtf}{{R-{\rm tf}}} \nc{\dtor}{{R-{\rm
tor}}}
\nc{\depth}{{\mrm d}}
\nc{\Div}{{\mrm Div}} \nc{\End}{\mrm{End}} \nc{\Ext}{\mrm{Ext}}
\nc{\Fil}{\mrm{Fil}} \nc{\Frob}{\mrm{Frob}} \nc{\Gal}{\mrm{Gal}}
\nc{\GL}{\mrm{GL}} \nc{\Hom}{\mrm{Hom}} \nc{\hsr}{\mrm{H}}
\nc{\hpol}{\mrm{HP}} \nc{\id}{\mrm{id}} \nc{\im}{\mrm{im}}
\nc{\incl}{\mrm{incl}} \nc{\length}{\mrm{length}}
\nc{\LR}{\mrm{LR}} \nc{\mchar}{\rm char} \nc{\NC}{\mrm{NC}}
\nc{\mpart}{\mrm{part}} \nc{\pl}{\mrm{PL}}
\nc{\ql}{{\QQ_\ell}} \nc{\qp}{{\QQ_p}}
\nc{\rank}{\mrm{rank}} \nc{\rba}{\rm{RBA }} \nc{\rbas}{\rm{RBAs }}
\nc{\rbpl}{\mrm{RBPL}}
\nc{\rbw}{\rm{RBW }} \nc{\rbws}{\rm{RBWs }} \nc{\rcot}{\mrm{cot}}
\nc{\rest}{\rm{controlled}\xspace}
\nc{\rdef}{\mrm{def}} \nc{\rdiv}{{\rm div}} \nc{\rtf}{{\rm tf}}
\nc{\rtor}{{\rm tor}} \nc{\res}{\mrm{res}} \nc{\SL}{\mrm{SL}}
\nc{\Spec}{\mrm{Spec}} \nc{\tor}{\mrm{tor}} \nc{\Tr}{\mrm{Tr}}
\nc{\mtr}{\mrm{sk}}
\nc{\ab}{\mathbf{Ab}} \nc{\Alg}{\mathbf{Alg}}
\nc{\Algo}{\mathbf{Alg}^0} \nc{\Bax}{\mathbf{Bax}}
\nc{\Baxo}{\mathbf{Bax}^0} \nc{\RB}{\mathbf{RB}}
\nc{\RBo}{\mathbf{RB}^0} \nc{\BRB}{\mathbf{RB}}
\nc{\Dend}{\mathbf{DD}} \nc{\bfk}{{\bf k}} \nc{\bfone}{{\bf 1}}
\nc{\base}[1]{{a_{#1}}} \nc{\detail}{\marginpar{\bf More detail}
    \noindent{\bf Need more detail!}
    \svp}
\nc{\Diff}{\mathbf{Diff}} \nc{\gap}{\marginpar{\bf
Incomplete}\noindent{\bf Incomplete!!}
    \svp}
\nc{\FMod}{\mathbf{FMod}} \nc{\mset}{\mathbf{MSet}}
\nc{\rb}{\mathrm{RB}} \nc{\Int}{\mathbf{Int}}
\nc{\Mon}{\mathbf{Mon}}
\nc{\remarks}{\noindent{\bf Remarks: }}
\nc{\OS}{\mathbf{OS}} %free operated semigroup
\nc{\Rep}{\mathbf{Rep}}
\nc{\Rings}{\mathbf{Rings}} \nc{\Sets}{\mathbf{Sets}}
\nc{\DT}{\mathbf{DT}}
\nc{\BA}{{\mathbb A}} \nc{\CC}{{\mathbb C}} \nc{\DD}{{\mathbb D}}
\nc{\EE}{{\mathbb E}} \nc{\FF}{{\mathbb F}} \nc{\GG}{{\mathbb G}}
\nc{\HH}{{\mathbb H}} \nc{\LL}{{\mathbb L}} \nc{\NN}{{\mathbb N}}
\nc{\QQ}{{\mathbb Q}} \nc{\RR}{{\mathbb R}} \nc{\BS}{{\mathbb{S}}} \nc{\TT}{{\mathbb T}}
\nc{\VV}{{\mathbb V}} \nc{\ZZ}{{\mathbb Z}}
\nc{\calao}{{\mathcal A}} \nc{\cala}{{\mathcal A}}
\nc{\calc}{{\mathcal C}} \nc{\cald}{{\mathcal D}}
\nc{\cale}{{\mathcal E}} \nc{\calf}{{\mathcal F}}
\nc{\calfr}{{{\mathcal F}^{\,r}}} \nc{\calfo}{{\mathcal F}^0}
\nc{\calfro}{{\mathcal F}^{\,r,0}} \nc{\oF}{\overline{F}}
\nc{\calg}{{\mathcal G}} \nc{\calh}{{\mathcal H}}
\nc{\cali}{{\mathcal I}} \nc{\calj}{{\mathcal J}}
\nc{\call}{{\mathcal L}} \nc{\calm}{{\mathcal M}}
\nc{\caln}{{\mathcal N}} \nc{\calo}{{\mathcal O}}
\nc{\calp}{{\mathcal P}} \nc{\calq}{{\mathcal Q}} \nc{\calr}{{\mathcal R}}
\nc{\calt}{{\mathcal T}} \nc{\caltr}{{\mathcal T}^{\,r}}
\nc{\calu}{{\mathcal U}} \nc{\calv}{{\mathcal V}}
\nc{\calw}{{\mathcal W}} \nc{\calx}{{\mathcal X}}
\nc{\CA}{\mathcal{A}}
\nc{\fraka}{{\mathfrak a}} \nc{\frakB}{{\mathfrak B}}
\nc{\frakb}{{\mathfrak b}} \nc{\frakd}{{\mathfrak d}}
\nc{\oD}{\overline{D}}
\nc{\frakF}{{\mathfrak F}} \nc{\frakg}{{\mathfrak g}}
\nc{\frakm}{{\mathfrak m}} \nc{\frakM}{{\mathfrak M}}
\nc{\frakMo}{{\mathfrak M}^0} \nc{\frakp}{{\mathfrak p}}
\nc{\frakS}{{\mathfrak S}} \nc{\frakSo}{{\mathfrak S}^0}
\nc{\fraks}{{\mathfrak s}} \nc{\os}{\overline{\fraks}}
\nc{\frakT}{{\mathfrak T}}
\nc{\oT}{\overline{T}}
\nc{\frakX}{{\mathfrak X}} \nc{\frakXo}{{\mathfrak X}^0}
\nc{\frakx}{{\mathbf x}}
\nc{\frakTx}{\frakT}      %All rooted trees, correspond to \ncsha(X)
\nc{\frakTa}{\frakT^a}        % rooted trees for \ncsha(A)
\nc{\frakTxo}{\frakTx^0}   % rooted trees for \ncshao(X)
\nc{\caltao}{\calt^{a,0}}   % rooted trees for \ncshao(A)
\nc{\ox}{\overline{\frakx}} \nc{\fraky}{{\mathfrak y}}
\nc{\frakz}{{\mathfrak z}} \nc{\oX}{\overline{X}}
\font\cyr=wncyr10
\nc{\redtext}[1]{\textcolor{red}{#1}}
\g@addto@macro{\endabstract}{\@setabstract}
\newcommand{\authorfootnotes}{\renewcommand\thefootnote{\@fnsymbol\c@footnote}}%
\begin{document}
\begin{center}
  \LARGE
\textbf{Biderivations of W-algebra $W(2,2)$ and Virasoro algebra without skewsymmetric condition and their applications}

  \normalsize
  \authorfootnotes
Xiaomin Tang   \footnote{Corresponding author: {\it X. Tang. Email:} x.m.tang@163.com}
\par \bigskip

   \textsuperscript{1}Department of Mathematics, Heilongjiang University,
Harbin, 150080, P. R. China   \par

\end{center}

%%%%%%%%%%%%%%%%%%%%%%

\begin{abstract}

In this paper, we characterize the biderivations of W-algebra $W(2,2)$ and Virasoro algebra $Vir$ without skewsymmetric condition. We get two classes of non-inner biderivations. As applications, we characterize the forms of linear commuting maps  and the commutative post-Lie algebra structures on W-algebra $W(2,2)$ and Virasoro algebra $Vir$.

\vspace{2mm}

\noindent{\it Keywords:} biderivation, Virasoro algebra, W-algebra, linear commuting map, post-Lie algebra

\noindent{\it AMS subject classifications:} 17B05, 17B40, 17B65.

\end{abstract}

\setcounter{section}{0}
{\ }

 \baselineskip=20pt

%%%%%%%%%%%%%%%%%%%%%%%%%%%%%%%%%%%%%%      text      %%%%%%%%%%%%%%%%%%%%%%%%%%%%%%%%%%%%%%%%%%
\section{Introduction and  preliminary results}

Derivations and generalized derivations are very important subjects in the
study of both algebras and their generalizations. In recent years, many authors put so much effort into this problems \cite{Bre1995,Ben2009,Chen2016,Du2013,Gho2013,Hanw,tang2016,WD1,WD3,WD2}. In \cite{Bre1995}, Bre$\breve{s}$ar et. al. showed that all biderivations on commutative prime rings are inner biderivations, and determine the biderivations of semiprime
rings. The notation of biderivations of Lie algebras was introduced in \cite{WD3}, his part result is shown that the skewsymmetric biderivation of finite dimensional complex simple Lie algebra is inner. In addition, \cite{WD1} and \cite{Chen2016} study the skewsymmetric biderivations on the Schr$\ddot{o}$dinger-Virasoro algebra and simple generalized Witt algebra over a field of characteristic $0$, respectively. It is shown that thy are inner.  In \cite{Hanw}, the authors determine all the skewsymmetric biderivations of $W(a,b)$ and find that there exist non-inner biderivations. The problem on biderivation is also generated to super-biderivation, in \cite{WD2} the authors show that the skewsymmetric super-biderivations on the Super-Virasoro Algebras is inner.

Note that almost all the above articles on Lie (super-)algebra are assumed that the biderivation is (super-)skewsymmetric (although the authors in \cite{WD1} and \cite{WD3} did not refer to the skewsymmetric hypothesis, but this hypothesis should be put into use, as pointed out in \cite{Chen2016}, \cite{tang2016}). So, this class of problems without (super-)skewsymmetric condition should receive attention. Based on this, we \cite{tang2016} study the biderivations of finite dimensional complex simple Lie algebra and general linear Lie algebra without the restriction of skewsymmetric, we find that there exist non-skewsymmetric biderivations. It may be useful and interesting for computing the biderivations of some important Lie (super-)algebras. On the other hand, Virasoro algebra as the universal central extension of the Witt algebra, is an infinite dimensional Lie algebra which has appeared in several context, it has been paid great attention of mathematicians and physicists, see \cite{Frec,GKO1,GKO2} and so on. Another
an infinite dimensional Lie algebra $W(2,2)$ is a class of W-algebra, which is a non-central extension of the Virasoro algebra and first introduced by \cite{JiangDong} in their recent work on the classification of some simple vertex operator algebras, and then some scholars studied the theory on structures and representations of $W(2,2)$, see \cite{Chenhj,GJP,Jiangzhangwei,Rad} and so forth. In this paper, we dedicate themselves to study the biderivations of Virasoro algebra and W-algebra $W(2,2)$ without skewsymmetric condition.   And then, we also give two applications of biderivations of them. That is, the fist application is to characterize the linear commuting map on a Lie algebra by using a well known method, which also works for skewsymmetric biderivation.  The second application is to characterize the commutative post-Lie algebra structures on Lie algebra, note that the precondition of the method is that the biderivation should be assumed to be non-skewsymmetric.

 Although paper we work under complex number field $\mathbb{C}$, but it  also works with algebraically closed field of characteristic zero.
For an arbitrary Lie algebra $L$, we recall a bilinear map $f : L\times L \rightarrow L$ is a
biderivation of $L$ if it is a derivation with respect to both components, or to be more precise, one has
\begin{definition}
Suppose that $L$ is a Lie algebra. A bilinear map $f: L\times L\rightarrow L$ is called a biderivation if it satisfying
\begin{eqnarray}
f([x,y],z)=[x,f(y,z)]+[f(x,z),y], \label{2der}\\
f(x,[y,z])=[f(x,y),z]+[y,f(x,z)] \label{1der}
\end{eqnarray}
for all $x, y, z\in L$.
\end{definition}
For a complex number $\lambda$, we define a bilinear map $f: L\times L\rightarrow L$ given by $f(x,y)=\lambda [x,y]$, then it is easy to verify that $f$
is a biderivation of $L$. We call such biderivation to be inner.  $f$ is called skew-symmetric if $f(x,y)=-f(y,x)$ for all $x,y\in L$.

Now let us recall the definition of derivation of Lie algebra as follows.
\begin{definition}
Suppose that $L$ is a Lie algebra. A linear map $\phi: L\rightarrow L$ is called a derivation if it satisfying
\begin{eqnarray}
\phi([x,y])=[\phi(x),y]+[x,\phi(y)]
\end{eqnarray}
for all $x, y\in L$.
\end{definition}

For $x\in L$, it is easy to see that $\phi_x:L\rightarrow L, y\mapsto {\rm ad} x(y)=[x,y], $ for all $y\in L$ is a derivation of $L$, which is called an inner derivation. Below let us recall the definition of Virasoro algebra and W-algebra $W(2,2)$.

\begin{definition}
The infinite dimensional Lie algebra $Vir$ is called Virasoro algebra if it with the basis  $\{L_m,  c| m\in \mathbb{Z} \}$ and Lie brackets
\begin{equation*}
[L_m,L_n]=(m-n)L_{m+n}+\frac{m^3-m}{12}\delta_{m+n,0}c, \ \ [Vir, c]=0.
\end{equation*}
\end{definition}

\begin{definition}
The W-algebra $W(2,2)$ is a Lie algebra with the basis $\{L_m, H_m, c| m\in \mathbb{Z} \}$ and Lie brackets
\begin{eqnarray*}
&[L_m,L_n]=(m-n)L_{m+n}+\frac{m^3-m}{12}\delta_{m+n,0}c,\\
&[L_m,H_n]=(m-n)H_{m+n}+\frac{m^3-m}{12}\delta_{m+n,0}c,\\
&[H_m,H_n]=0, \ \ [W(2,2), c]=0.
\end{eqnarray*}
\end{definition}

We have the following lemmas about the derivation of Virasoro algebra and $W(2,2)$.
\begin{lemma} \label{zhulinsheng}\cite{zhulin}
Every derivation of Virasoro algebra is inner.
\end{lemma}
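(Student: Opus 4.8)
The plan is to use the $\mathbb{Z}$-grading of $Vir$ coming from $\mathrm{ad}\,L_0$. Since $[L_0,L_n]=-nL_n$ and $[L_0,c]=0$, the operator $\mathrm{ad}\,L_0$ is diagonalizable and $Vir=\bigoplus_{s\in\mathbb{Z}}Vir_s$ with $Vir_s=\mathbb{C}L_s$ for $s\neq 0$ and $Vir_0=\mathbb{C}L_0\oplus\mathbb{C}c$. Given a derivation $D$, I would decompose it into homogeneous parts: for $x\in Vir_n$ let $D_s(x)$ be the component of $D(x)$ lying in $Vir_{n+s}$, extended by linearity. Because the bracket is graded and $D$ is a derivation, each $D_s$ is again a derivation (of degree $s$), and $D(x)=\sum_s D_s(x)$ is a finite sum for every homogeneous $x$. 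It then suffices to show that every $D_s$ is inner and that the resulting inner derivations assemble into a single one.

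First I would pin down the central and nonzero-degree behaviour. Applying $D$ to $[c,L_n]=0$ gives $[D(c),L_n]=0$ for all $n$, so $D(c)$ lies in the centre $\mathbb{C}c$; in particular $D_s(c)=0$ for $s\neq 0$. For each $s\neq 0$ set $v_s:=\tfrac1s D_s(L_0)\in Vir_s$ and put $E_s:=D_s-\mathrm{ad}\,v_s$. Since $[L_s,L_0]=sL_s$, the choice of $v_s$ gives $E_s(L_0)=0$, and $E_s(c)=0$ as well. Applying $E_s$ to $[L_0,L_n]=-nL_n$ and using $E_s(L_n)\in Vir_{n+s}$ yields $-nE_s(L_n)=-(n+s)E_s(L_n)$, hence $sE_s(L_n)=0$ and so $E_s(L_n)=0$ for all $n$. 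Thus $D_s=\mathrm{ad}\,v_s$ for every $s\neq 0$. The scalars defining the $v_s$ are the coefficients of $D(L_0)$, which is a finite combination of basis vectors, so only finitely many $v_s$ are nonzero; hence $v:=\sum_{s\neq 0}v_s$ is a genuine element of $Vir$ and $D-\mathrm{ad}\,v=D_0$ is a degree-zero derivation.

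It remains to show $D_0$ is inner. Write $D_0(L_n)=f(n)L_n$ for $n\neq 0$, $D_0(L_0)=\lambda L_0+\beta c$ and $D_0(c)=\gamma c$. Applying $D_0$ to $[L_0,L_n]=-nL_n$ forces $\lambda=0$. Applying it to $[L_m,L_{-m}]=2mL_0+\tfrac{m^3-m}{12}c$ and comparing $L_0$- and $c$-coefficients gives $f(m)+f(-m)=0$ together with the identity $2m\beta+\tfrac{m^3-m}{12}\gamma=0$, valid for all $m$; reading this as a polynomial in $m$ yields $\gamma=0$ and then $\beta=0$. Finally, applying $D_0$ to the generic brackets $[L_m,L_n]$ with $m+n\neq 0$ gives the additivity $f(m+n)=f(m)+f(n)$, and combining this with $f(-m)=-f(m)$ at the near-diagonal point $(m,n)=(2,-1)$ gives $f(2)=2f(1)$; a standard recursion then yields $f(n)=nf(1)$ for all $n$. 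Since $nf(1)L_n=\mathrm{ad}\,(-f(1)L_0)(L_n)$, we conclude $D_0=\mathrm{ad}\,(-f(1)L_0)$ and hence $D=\mathrm{ad}\,(v-f(1)L_0)$ is inner.

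The routine linear algebra aside, two points deserve care and are where I expect the real work to lie. First, the passage from ``each $D_s$ is inner'' to ``$D$ is inner'' is not automatic for an infinite-dimensional graded Lie algebra, which could in principle support outer derivations built from infinitely many inner pieces; what saves the argument is the finiteness of $D(L_0)$, which makes $v=\sum_{s\neq 0}v_s$ a finite sum. Second, in the degree-zero step the central term $\tfrac{m^3-m}{12}c$ is indispensable: it is exactly the cubic-in-$m$ identity that kills a possible rescaling $\gamma$ of $c$, and the low-degree relation such as $[L_2,L_{-1}]$ is what upgrades mere additivity of $f$ to linearity. I therefore expect the degree-zero analysis, with its careful bookkeeping of the central extension, to be the main obstacle.
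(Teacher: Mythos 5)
The paper does not prove this lemma at all: it is quoted from the reference [Zhu--Meng] and used as a black box, so there is no internal proof to compare against. Your argument is a correct, self-contained proof along the standard lines for graded Lie algebras. The decomposition of a derivation $D$ into homogeneous components $D_s$ with respect to the $\mathbb{Z}$-grading by $\mathrm{ad}\,L_0$ is valid, the normalization $E_s=D_s-\mathrm{ad}\,v_s$ with $v_s=\tfrac1s D_s(L_0)$ does kill $E_s(L_0)$, and the eigenvalue comparison $-nE_s(L_n)=-(n+s)E_s(L_n)$ (which also holds at $n+s=0$ since $\mathrm{ad}\,L_0$ vanishes on $Vir_0=\mathbb{C}L_0\oplus\mathbb{C}c$) forces $E_s=0$ for $s\neq 0$. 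You correctly flag the one genuinely delicate point, namely that ``each homogeneous piece is inner'' does not by itself give ``$D$ is inner'' in infinite dimensions, and you close it by observing that $D(L_0)=\sum_s sv_s$ is a finite sum, so only finitely many $v_s$ survive. The degree-zero analysis is also right: $\lambda=0$ from $[L_0,L_n]$, the polynomial identity $2m\beta+\tfrac{m^3-m}{12}\gamma=0$ in $m$ forces $\gamma=\beta=0$, additivity of $f$ off the antidiagonal plus $f(-m)=-f(m)$ and the point $(2,-1)$ give $f(n)=nf(1)$, whence $D_0=\mathrm{ad}(-f(1)L_0)$. The only cosmetic quibble is that the $\mathrm{ad}\,L_0$-eigenvalue of $L_n$ is $-n$ rather than $n$, so your indexing of $Vir_s$ is by subscript rather than by eigenvalue; this does not affect any computation. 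Your proof would make the paper more self-contained than the bare citation it currently relies on.
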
\label{dervira}
\begin{lemma}\cite{GJP}\label{innerLie}
Every derivation $\delta$ of $W(2,2)$ is of the following form
$$
\delta = {\rm ad} x + a D
$$
for some $x\in W(2,2)$ and $a\in \mathbb{C}$, where $D$ is a out derivation of $W(2,2)$, which is defined by $D(L_m)=0$, $D(I_m)=I_m$ for all $m\in \mathbb{Z}$ and $D(c)=dc$ for some $d\in \mathbb{C}$.
\end{lemma}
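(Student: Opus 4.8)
The plan is to exploit the $\ZZ$-grading of $\mathfrak{g} := W(2,2)$ induced by ${\rm ad}\,L_0$, reduce an arbitrary derivation to homogeneous ones, and then solve the resulting recursions, importing the Virasoro computation of Lemma~\ref{zhulinsheng} for the $L$-part. Set $\mathfrak{g}_n = \CC L_n \oplus \CC H_n$ for $n \neq 0$ and $\mathfrak{g}_0 = \CC L_0 \oplus \CC H_0 \oplus \CC c$; since ${\rm ad}\,L_0$ acts on $\mathfrak{g}_n$ as multiplication by $-n$, this is a Lie algebra grading $[\mathfrak{g}_m,\mathfrak{g}_n]\subseteq \mathfrak{g}_{m+n}$ (throughout I write $H_m$ for the $I_m$ of the statement). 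First I would record that the center of $\mathfrak{g}$ is $\CC c$: imposing $[L_n,z]=[H_n,z]=0$ on a general element forces all $L_m$- and $H_m$-coefficients to vanish. Applying $\delta$ to $[x,c]=0$ then gives $[x,\delta(c)]=0$ for all $x$, whence $\delta(c)=d\,c$ for some $d\in\CC$.

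Next I would split $\delta$ into homogeneous pieces. For $x\in\mathfrak{g}_n$ put $\delta_k(x)=\pi_{n+k}(\delta x)$, where $\pi_j$ is the projection onto $\mathfrak{g}_j$; comparing the $\mathfrak{g}_{m+n+k}$-components of the derivation identity on $\mathfrak{g}_m\times\mathfrak{g}_n$ shows each $\delta_k$ is a derivation of degree $k$, and $\delta(x)=\sum_k \delta_k(x)$ is a finite sum for each $x$. Because $\delta(c)=d\,c$ lies in degree $0$, we get $\delta_k(c)=0$ for $k\neq 0$. A finiteness remark, needed since $\mathfrak{g}$ is infinite-dimensional, is that only finitely many $\delta_k$ are nonzero: evaluating all $\delta_k$ on a single $L_1$ and using that $\delta(L_1)$ is a finite combination forces the degree-$k$ parameters to vanish for all but finitely many $k$, so the inner pieces below will assemble into an honest element $x\in\mathfrak{g}$.

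Then I would solve the homogeneous cases. Writing $\delta_k(H_m)=p_m L_{m+k}+q_m H_{m+k}$ (plus a possible $c$-term only when $m+k=0$) and feeding this into $\delta_k[H_m,H_n]=0$, the $H_{m+n+k}$-coefficient gives $p_m(m+k-n)=p_n(n+k-m)$; the substitution $n=m-k$ yields $2k\,p_m=0$, and a short three-index argument settles $k=0$, so $p_m\equiv 0$ in every degree. Hence the abelian ideal $\mathfrak{h}=\bigoplus_m \CC H_m\oplus\CC c$ is preserved and $\delta$ descends to a derivation of $\mathfrak{g}/\mathfrak{h}\cong Vir$. The $L$-valued recursion $(m-n)a_{m+n}=a_m(m+k-n)+a_n(m-n-k)$ is exactly the Virasoro derivation equation, whose solutions are inner by Lemma~\ref{zhulinsheng}; the identical recursion governs the $H$-valued coefficients $b_m$ of $\delta_k(L_m)$, again exhausted by ${\rm ad}$ of an element of $\mathfrak{g}_k$. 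So after subtracting a suitable ${\rm ad}\,x$ with $x\in\mathfrak{g}$ I may assume $\delta(L_m)=0$ for all $m$. Feeding this into $\delta[L_m,H_n]=(m-n)\delta(H_{m+n})+\tfrac{m^3-m}{12}\delta_{m+n,0}\,d\,c$ gives $q_{m+n}=q_n$ and, at $m+n=0$, $q_{-m}=d$; hence $q_m\equiv d$ is constant, which is precisely the outer derivation: $\delta(H_m)=d\,H_m=d\,D(H_m)$. Therefore $\delta={\rm ad}\,x+d\,D$.

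The main obstacle is the careful bookkeeping of the central terms $\tfrac{m^3-m}{12}\delta_{m+n,0}\,c$, which surface exactly at the exceptional indices where a homogeneous component lands in degree $0$. These are the equations that both fix $d$ and separate the genuine outer derivation $D$ from the inner ones, and they are the only part of the argument not governed by the generic Witt-type recursions; in particular one must check that the $c$-coefficient left over in $\delta_k(L_{-k})$ after subtracting ${\rm ad}(\alpha_k L_k+\beta_k H_k)$ indeed cancels. A secondary point requiring care is the infinite-dimensional justification of the homogeneous decomposition together with the finiteness of $x=\sum_k x_k$ as an element of $\mathfrak{g}$.
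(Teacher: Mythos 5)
The paper does not actually prove this lemma: it is quoted verbatim from the reference [GJP] (Gao--Jiang--Pei), so there is no in-text argument to compare against. Your reconstruction follows the standard route for graded Lie algebras and is essentially sound: the $\mathbb{Z}$-grading by ${\rm ad}\,L_0$, the decomposition of $\delta$ into homogeneous components $\delta_k$, the observation that $\delta(c)\in Z(W(2,2))=\mathbb{C}c$, the computation $2k\,p_m=0$ killing the $L$-components of $\delta_k(H_m)$ (your three-index argument correctly handles $k=0$), the Witt-type recursion $(m-n)a_{m+n}=a_m(m+k-n)+a_n(m-n-k)$ for both the $L$- and $H$-coefficients of $\delta_k(L_m)$, and the final step $q_{m+n}=q_n$, $q=d$ isolating the outer derivation $D$. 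These computations check out.

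A few points deserve tightening. First, the quotient $\mathfrak{g}/\mathfrak{h}$ is the \emph{Witt} algebra, not $Vir$, since $c\in\mathfrak{h}$; to invoke Lemma \ref{zhulinsheng} you either need the (classical) fact that $H^1$ of the Witt algebra in its adjoint module vanishes, or you should argue directly. In fact both this and your finiteness worry are handled at once by the cleaner identity $k\,\delta_k(y)=[\delta_k(L_0),y]$ for $y\in\mathfrak{g}_n$ (obtained by applying $\delta_k$ to $[L_0,y]=-ny$), which shows every $\delta_k$ with $k\neq0$ equals $\tfrac1k{\rm ad}(\delta_k(L_0))$ and is therefore inner, and that $\delta_k=0$ whenever $\delta_k(L_0)=0$; since $\delta(L_0)$ is a finite sum, only finitely many $\delta_k$ survive. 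Your version via $L_1$ is not quite enough as stated, because you have not yet shown $\delta_k$ is determined by its value on $L_1$. Second, the residual central coefficients you flag (e.g.\ a putative $\delta(L_{-k})=\epsilon c$ with all other components zero) are excluded by evaluating the derivation identity on $[L_m,L_n]$ with $m+n=-k$, $m\neq n$: the left side is $(m-n)\epsilon c\neq0$ while the right side vanishes. Finally, your last computation actually pins down $D(c)=c$ (i.e.\ $d=1$ in the statement's notation), since comparing $c$-coefficients in $\delta[L_m,H_{-m}]$ forces the central eigenvalue of $\delta$ to equal the $H$-eigenvalue $q$; the indeterminate $d$ in the quoted statement is an artifact of how [GJP] is transcribed here, not something your argument needs to accommodate.
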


\section{Biderivation  of W-algebra}% $W(2,2)$}

In this section, we always assume that $f$ is a biderivation of W-algebra $W(2,2)$.
\begin{lemma}\label{phipsi}
There are two linear maps $\phi$ and $\psi$ from $W(2,2)$ into itself such that
\begin{equation}
f(x,y)=l_x D(y)+[\phi(x), y]=r_y D(x)+[x, \psi(y)], \  \ \forall x,y\in L,
\end{equation}
where $l_x, r_x$ are complex numbers depend on $x$, and $D$ is given by Lemma \ref{innerLie}.
\end{lemma}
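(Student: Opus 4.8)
The plan is to exploit the fact that a biderivation restricts to an ordinary derivation in each of its two arguments, and then to feed this into the structure theorem for derivations of $W(2,2)$ recorded in Lemma \ref{innerLie}. Fixing $x$ and reading identity \eqref{1der} as $f(x,[y,z]) = [f(x,y),z] + [y,f(x,z)]$, we see that $d_x := f(x,\cdot)$ satisfies the derivation identity, so $d_x$ is a derivation of $W(2,2)$. Lemma \ref{innerLie} then writes $d_x = \mathrm{ad}\,\phi(x) + l_x D$ for some $\phi(x) \in W(2,2)$ and $l_x \in \mathbb{C}$, which is precisely the first expression $f(x,y) = l_x D(y) + [\phi(x),y]$. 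Symmetrically, fixing the second argument and rewriting \eqref{2der} as a derivation identity in the first argument shows that $x \mapsto f(x,y)$ is a derivation for each $y$; applying Lemma \ref{innerLie} again and absorbing a sign into the inner part via skew symmetry of the bracket yields the second expression $f(x,y) = r_y D(x) + [x,\psi(y)]$.

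The point that genuinely needs attention is that $\phi,\psi$ can be chosen \emph{linear} and that $l_x, r_y$ are well defined. Here I would use that $D$ is an outer derivation: this forces $\mathbb{C}D \cap \mathrm{ad}(W(2,2)) = 0$, so the derivation algebra splits as a direct sum $\mathrm{ad}(W(2,2)) \oplus \mathbb{C}D$. Consequently the $D$-component of a derivation is a well-defined linear functional of that derivation, and precomposing with the linear assignment $x \mapsto d_x = f(x,\cdot)$ shows $x \mapsto l_x$ is linear; the same reasoning gives linearity of $y \mapsto r_y$. For the inner parts, $\mathrm{ad}$ induces an isomorphism $W(2,2)/Z(W(2,2)) \cong \mathrm{ad}(W(2,2))$, so each $\phi(x)$ is determined only modulo the center $Z(W(2,2))$; fixing once and for all a linear complement of $Z(W(2,2))$ in $W(2,2)$ produces an honest linear map $\phi$, and likewise $\psi$.

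The derivation identities themselves are immediate, so I expect the only nonroutine step to be this bookkeeping around well-definedness. The decisive structural input is precisely the outerness of $D$ in Lemma \ref{innerLie}: it is what guarantees the direct-sum splitting, hence the uniqueness of the coefficients $l_x, r_y$ and the legitimacy of a linear choice of $\phi$ and $\psi$. Everything else reduces to the observation that the two defining axioms of a biderivation are nothing but the statements that its two partial maps are derivations.
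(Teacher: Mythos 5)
Your argument is essentially the paper's own proof: fix one argument, observe via \eqref{1der} and \eqref{2der} that each partial map is a derivation, and apply Lemma \ref{innerLie} to decompose it as $\mathrm{ad}$ of some element plus a multiple of $D$, absorbing a sign to get $[x,\psi(y)]$. Your extra care about well-definedness --- using the outerness of $D$ to split off the coefficient $l_x$, and choosing a linear complement of the center to make $\phi$ and $\psi$ genuinely linear --- is in fact more rigorous than the paper's one-line remark that ``due to $f$ is bilinear, one has $\phi$ is linear.''
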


\begin{proof}
For the biderivation $f$ of $L$ and a fixed element $x\in L$, we define a map $\phi_x: W(2,2)\rightarrow W(2,2)$ is given by $\phi_x(y)=f(x,y)$. Then we know by (\ref{1der}) that $\phi_x$ is a derivation of $W(2,2)$. So there is a map
$\phi: L\rightarrow L$ such that $\phi_x=k_x D+{\rm ad}\phi(x)$, i.e.,  $f(x,y)=l_x D(y)+[\phi(x), y]$. Due to $f$ is bilinear, one has $\phi$ is linear. Similarly, as we define a map $\psi_z$ from $W(2,2)$ into itself is given by $\psi_z(y)=f(y, z)$ for all $y\in L$, we can get a linear map $\psi$ from $W(2,2)$ into itself such that $f(x,y)=r_y D(x)+{\rm ad}(-\psi (y))(x)=r_y D(x)+[x, \psi(y)]$. The proof is completed.
\end{proof}

By Lemmas \ref{phipsi} and \ref{innerLie}, we have
\begin{lemma}\label{relation}
For any $i,j\in \mathbb{Z}$, one has the following equations:
\begin{eqnarray}
f(L_i,L_j)=[\phi(L_i),L_j]=[L_i,\psi(L_j)], \label{LL1}\\
f(L_i,H_j)=l_{L_i} H_j+[\phi(L_i),H_j]=[L_i,\psi(H_j)],\label{LL2}\\
f(H_i,L_j)=[\phi(H_i),L_j]=r_{L_j}H_i+[H_i,\psi(L_j)],\label{LL3}\\
f(H_i,H_j)=l_{H_i}H_j+[\phi(H_i), H_j]=r_{H_j}H_i+[H_i,\psi(H_j)], \label{LL4}\\
f(x,c)=dl_{x}c, f(c,y)=dr_yc, \ x,y\in \{L_m,H_m\},\label{LL5}
\end{eqnarray}
where $\phi,\psi, l_x, r_x$ are given by Lemma \ref{phipsi}.
\end{lemma}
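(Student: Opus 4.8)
The plan is to read off all five identities directly from the two decompositions of $f$ supplied by Lemma~\ref{phipsi} by substituting the explicit values of the outer derivation $D$ on basis elements. From Lemma~\ref{innerLie} the derivation $D$ satisfies $D(L_m)=0$, $D(H_m)=H_m$ and $D(c)=dc$ for all $m\in\mathbb{Z}$, so the whole computation amounts to remembering that $D$ annihilates every $L_m$, acts as the identity on every $H_m$, and scales $c$ by $d$.

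First I would treat the four identities (\ref{LL1})--(\ref{LL4}), which cover the pairs of basis elements drawn from $\{L_m,H_m\}$. For such a pair $(x,y)$, Lemma~\ref{phipsi} supplies both $f(x,y)=l_x D(y)+[\phi(x),y]$ and $f(x,y)=r_y D(x)+[x,\psi(y)]$ simultaneously. In the first presentation the term $l_x D(y)$ vanishes when $y=L_j$ and equals $l_x H_j$ when $y=H_j$; in the second presentation $r_y D(x)$ vanishes when $x=L_i$ and equals $r_y H_i$ when $x=H_i$. Substituting the four cases $(L_i,L_j)$, $(L_i,H_j)$, $(H_i,L_j)$, $(H_i,H_j)$ into both presentations then produces (\ref{LL1}), (\ref{LL2}), (\ref{LL3}) and (\ref{LL4}) respectively.

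For (\ref{LL5}) I would use the centrality of $c$. Since $[W(2,2),c]=0$, the bracket terms $[\phi(x),c]$ and $[c,\psi(y)]$ both vanish, so the two decompositions of Lemma~\ref{phipsi} collapse to $f(x,c)=l_x D(c)=d\,l_x c$ and $f(c,y)=r_y D(c)=d\,r_y c$ for $x,y\in\{L_m,H_m\}$, which is precisely (\ref{LL5}).

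This is a routine substitution, so I do not expect a serious obstacle; the only point demanding care is the bookkeeping of which $l$- and $r$-coefficients survive, i.e.\ keeping straight that $D$ vanishes on the $L$-part of the basis but is the identity on the $H$-part, and that the two presentations of $f$ in Lemma~\ref{phipsi} must be made to agree term by term after these substitutions.
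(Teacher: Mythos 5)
Your proposal is correct and matches the paper's approach exactly: the paper gives no separate argument for this lemma, simply asserting that it follows from Lemmas \ref{phipsi} and \ref{innerLie} by the very substitution you describe (using $D(L_m)=0$, $D(H_m)=H_m$, $D(c)=dc$, and the centrality of $c$). Your bookkeeping of which $l$- and $r$-terms survive in each of the four basis cases, and the collapse of the bracket terms in (\ref{LL5}), is precisely what is intended.
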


\begin{lemma}\label{LMLN}
Let $\phi$ and $\psi$ be defined by Lemma \ref{phipsi}.  For any $m\in \mathbb{Z}$, there are $\lambda, \mu, \alpha_m \beta_m\in C$ such that
\begin{eqnarray*}
\phi(L_m)=\lambda L_m + \mu H_m+ \alpha_m c, \\
\psi(L_m)=\lambda L_m + \mu H_m+ \beta_m c.
\end{eqnarray*}
\end{lemma}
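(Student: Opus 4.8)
The plan is to work entirely from the central identity (\ref{LL1}), namely $[\phi(L_i),L_j]=[L_i,\psi(L_j)]$, and to extract everything by comparing coefficients in the basis $\{L_n,H_n,c\}$. Since $c$ is central it is invisible to every bracket; hence the $c$-components of $\phi(L_i)$ and $\psi(L_j)$ play no role in (\ref{LL1}) and can be absorbed at the very end into the free coefficients $\alpha_m,\beta_m$. So I first set the central parts aside and write $\phi(L_i)=\sum_k a^{(i)}_k L_k+\sum_k b^{(i)}_k H_k+\alpha_i c$ and $\psi(L_j)=\sum_l p^{(j)}_l L_l+\sum_l q^{(j)}_l H_l+\beta_j c$ (all sums finite), the goal being to prove $a^{(i)}_k=\lambda\,\delta_{k,i}$, $b^{(i)}_k=\mu\,\delta_{k,i}$ and the analogous statements for $p,q$, with $\lambda,\mu$ independent of the index.

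Next I would expand both sides of (\ref{LL1}) using $[L_k,L_j]=(k-j)L_{k+j}+\cdots$, $[H_k,L_j]=(k-j)H_{k+j}+\cdots$ and $[L_i,H_l]=(i-l)H_{i+l}+\cdots$, the omitted terms being central and tracked separately. Because the $L$- and $H$-basis vectors are linearly independent, the identity splits into two scalar families. Comparing the coefficient of $L_n$ gives
\[ a^{(i)}_{n-j}\,(n-2j)=p^{(j)}_{n-i}\,(2i-n) \]
for all $i,j,n$, and comparing the coefficient of $H_n$ gives the identical relation with $a,p$ replaced by $b,q$.

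The heart of the argument is a specialization applied to the $L$-relation. Setting $n=2j$ annihilates the left side and forces $p^{(j)}_{2j-i}(2i-2j)=0$; since $i\mapsto 2j-i$ is an involution of $\mathbb{Z}$ fixing only $j$, letting $i$ run over all values $\neq j$ yields $p^{(j)}_l=0$ for every $l\neq j$. Symmetrically, setting $n=2i$ forces $a^{(i)}_k=0$ for every $k\neq i$. Thus both $L$-parts are concentrated in a single degree, and the only nontrivial instance of the displayed identity, at $n=i+j$, reads $a^{(i)}_i(i-j)=p^{(j)}_j(i-j)$, so $a^{(i)}_i=p^{(j)}_j$ whenever $i\neq j$. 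Over the infinite index set this collapses all the diagonal coefficients into one common value $\lambda:=a^{(i)}_i=p^{(i)}_i$. Running the same three steps verbatim on the $H$-relation produces a common value $\mu:=b^{(i)}_i=q^{(i)}_i$, and reinstating the central parts as $\alpha_m,\beta_m$ gives exactly the claimed forms.

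I expect the only real difficulty to be bookkeeping rather than anything conceptual: one must keep the two superscript families $a^{(i)},p^{(j)}$ (indexed by different variables) straight, and must verify the sign conventions in $[H_k,L_j]$ and $[L_i,H_l]$ so that the $H$-relation comes out in precisely the same shape as the $L$-relation. The specializations $n=2j$ and $n=2i$, together with the reflection $i\mapsto 2j-i$, are what make the ``concentration in one degree'' step work, and the infinitude of $\mathbb{Z}$ is genuinely used both there and in the final collapse of $a^{(i)}_i=p^{(j)}_j$ (valid only for $i\neq j$) into a single scalar.
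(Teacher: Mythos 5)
Your proposal is correct and follows essentially the same route as the paper: expand $\phi(L_i)$ and $\psi(L_j)$ in the basis, compare coefficients of $L_n$ and $H_n$ in $[\phi(L_i),L_j]=[L_i,\psi(L_j)]$ to get the relation $a^{(i)}_{n-j}(n-2j)=p^{(j)}_{n-i}(2i-n)$ (the paper's (\ref{abcd1})--(\ref{abcd2}) in different indexing), and your specializations $n=2i$, $n=2j$, $n=i+j$ are exactly the paper's choices $i=2m-n,\,n,\,m$. The bookkeeping, the use of the infinitude of $\mathbb{Z}$, and the absorption of the central parts into $\alpha_m,\beta_m$ all match the published argument.
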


\begin{proof}
For any but fixed $n\in \mathbb{Z}$, let
\begin{eqnarray}
\phi(L_n)=\sum_{i\in \mathbb{Z}} k_{i}^{(n)}L_i+\sum_{i\in \mathbb{Z}} t_{i}^{(n)}H_i+\alpha_n c, \label{ee1}\\
\psi(L_{n})=\sum_{i\in \mathbb{Z}} h_{i}^{(n)}L_i+\sum_{i\in \mathbb{Z}} g_{i}^{(n)}H_i+\beta_n c, \label{ee3}
\end{eqnarray}
where $k_{i}^{(n)}, t_{i}^{(n)},  h_{i}^{(n)}, g_{i}^{(n)}, \alpha_n, \beta_n \in \mathbb{C}, i\in \mathbb{Z}$.
Therefore, we have
\begin{equation}\label{philmln}
[\phi (L_m), L_n]=\sum_{i\in \mathbb{Z}} (i-n)k_{i}^{(m)}L_{n+i}+\sum_{i\in \mathbb{Z}}(i-n)t_{i}^{(m)}H_{n+i}+\frac {n-n^3}{12}(k_{-n}^{(m)}+t_{-n}^{(m)})c,
\end{equation}
\begin{eqnarray}
[L_m,\psi(L_n)]=\sum_{i\in \mathbb{Z}} (m-i)h_{i}^{(n)}L_{m+i}+
\sum_{i\in \mathbb{Z}}(m-i)g_{i}^{(n)}H_{m+i}+\frac {m^3-m}{12}(h_{-m}^{(n)}+g_{-m}^{(n)})c \nonumber\\
=\sum_{i\in \mathbb{Z}} (2m-n-i)h_{n-m+i}^{(n)}L_{n+i}+
\sum_{i\in \mathbb{Z}}(2m-n-i)g_{n-m+i}^{(n)}H_{n+i}+\frac {m^3-m}{12}(h_{-m}^{(n)}+g_{-m}^{(n)})c. \label{lmpsiln}
\end{eqnarray}
By (\ref{LL1}), we have $f(L_m,L_n)=[\phi (L_m), L_n]=[L_m,\psi(L_n)]$.  This, together whith Equations (\ref{philmln}) and (\ref{lmpsiln}), implies that
\begin{eqnarray}
(i-n)k_i^{(m)}=(2m-n-i)h_{n-m+i}^{(n)}, \label{abcd1}\\
(i-n)t_i^{(m)}=(2m-n-i)g_{n-m+i}^{(n)}. \label{abcd2}
\end{eqnarray}
 for all $m,n,i\in \mathbb{Z}$.  For any $m,n$ with $m\neq n$, by taking $i= 2m-n, n, m $  in (\ref{abcd1}) respectively, we have
 \begin{equation}\label{kkk}
 k_{2m-n}^{(m)}=h_{2n-m}^{(n)}=0, \ k_{m}^{(m)}=h_{n}^{(n)}, \ \forall m, n\in \mathbb{Z} \ \text {with}\ m\neq n.
 \end{equation}
 Let $m, n$ run all integers with $m\neq n$, we conclude by (\ref{kkk}) that
 \begin{equation}\label{kkklll}
 k_{i}^{(m)}=0, h_{j}^{(n)}=0,  \forall i\neq m, j\neq n,
 \end{equation}
 and
\begin{equation}\label{kkkrrr}
k_{n}^{(n)}=h_{n}^{(n)}=\ h_{0}^{(0)},\ \forall n\in \mathbb{Z}.
\end{equation}

 Similarly, we have by (\ref{abcd2}) that
$ t_{2m-n}^{(m)}=g_{2n-m}^{(n)}=0, \ t_{m}^{(m)}=g_{n}^{(n)}$ for all $m, n\in \mathbb{Z}$ with $m\neq n$. And then, we deduce
\begin{equation}\label{kkkll}
 t_{i}^{(m)}=0, g_{j}^{(n)}=0,  \forall i\neq m, j\neq n,
 \end{equation}
 and
\begin{equation}\label{kkkrr}
t_{n}^{(n)}=g_{n}^{(n)}=\ t_{0}^{(0)},\ \forall n\in \mathbb{Z}.
\end{equation}
 By letting $\ h_{0}^{(0)}=\lambda$ and $t_{0}^{(0)}=\mu$, the proof follows from Equations (\ref{ee1}), (\ref{ee3}), (\ref{kkklll}), (\ref{kkkrrr}),(\ref{kkkll}) and (\ref{kkkrr}).
\end{proof}

\begin{lemma}\label{KT}
Let $\phi$ and $\psi$ be defined by Lemma \ref{phipsi}. For any $m\in \mathbb{Z}$, we have
 \begin{eqnarray*}
 \phi(H_m)=\lambda H_m + \gamma_m c,\\
 \psi(H_m)=\lambda H_m+ \eta_m c, \\
 l_{L_m}=r_{L_m}=l_{H_m}=r_{H_m}=0,
 \end{eqnarray*}
 where $\lambda$ is given by Lemma \ref{LMLN}, $ l_{L_m}, r_{L_m}, l_{H_m}, r_{H_m}$ are give by Lemma \ref{LMLN} and $\gamma_m,\eta_m\in \mathbb{C}$.
\end{lemma}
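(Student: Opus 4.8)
The plan is to imitate the proof of Lemma \ref{LMLN}, but now extracting information from the mixed relations (\ref{LL2}), (\ref{LL3}) and (\ref{LL4}) of Lemma \ref{relation} and feeding in the forms of $\phi(L_m)$ and $\psi(L_m)$ already obtained there. First I would pin down $\phi(H_m)$ and the scalars $r_{L_m}$ from (\ref{LL3}). Writing $\phi(H_i)=\sum_{k}p_k^{(i)}L_k+\sum_{k}q_k^{(i)}H_k+\gamma_i c$ and expanding both sides of $[\phi(H_i),L_j]=r_{L_j}H_i+[H_i,\psi(L_j)]$ via $\psi(L_j)=\lambda L_j+\mu H_j+\beta_j c$ (Lemma \ref{LMLN}) together with $[H_i,H_j]=0$, the right-hand side has no $L$-component, so $\sum_k p_k^{(i)}(k-j)L_{k+j}=0$ for every $j$ and hence every $p_k^{(i)}=0$; thus $\phi(H_i)$ has no $L$-part. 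Matching $H$-components then gives, for all $i,j,m$,
\[
q_{m-j}^{(i)}(m-2j)=r_{L_j}\delta_{m,i}+\lambda(i-j)\delta_{m,i+j}.
\]
Evaluating at $m=i+j$ with $j\neq 0,i$ yields $q_i^{(i)}=\lambda$; evaluating at the remaining indices and letting $j$ range over the infinitely many admissible integers forces $q_k^{(i)}=0$ for $k\neq i$, so $\phi(H_i)=\lambda H_i+\gamma_i c$. Reading the $m=i$ line for $j\neq 0$ then gives $r_{L_j}=0$, and the degenerate line $j=0$ combined with $q_i^{(i)}=\lambda$ gives $r_{L_0}=0$.

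Second, the completely symmetric computation from (\ref{LL2}), namely $l_{L_i}H_j+[\phi(L_i),H_j]=[L_i,\psi(H_j)]$ with $\phi(L_i)=\lambda L_i+\mu H_i+\alpha_i c$, produces in the same manner $\psi(H_j)=\lambda H_j+\eta_j c$ together with $l_{L_i}=0$ for all $i$. Third, I would substitute the now-known expressions $\phi(H_i)=\lambda H_i+\gamma_i c$ and $\psi(H_j)=\lambda H_j+\eta_j c$ into (\ref{LL4}). Since $[H_i,H_j]=0$, both brackets in $l_{H_i}H_j+[\phi(H_i),H_j]=r_{H_j}H_i+[H_i,\psi(H_j)]$ vanish, leaving $l_{H_i}H_j=r_{H_j}H_i$; taking $i\neq j$ and using the linear independence of $H_i$ and $H_j$ forces $l_{H_i}=r_{H_j}=0$ for all $i,j$, which completes the list.

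The main obstacle is the index bookkeeping in the first step (and its mirror in the second): one must carefully separate the generic indices, where varying the free parameter $j$ over $\mathbb{Z}$ forces a coefficient to vanish, from the degenerate indices ($j=0$, $m=2j$, $i=j$) at which the factors $m-2j$ or $i-j$ vanish and the equation yields no information. Treating the $j=0$ case separately is precisely what is needed to capture $r_{L_0}$ and $l_{L_0}$, which the generic argument misses. The central-extension terms enter only the $c$-components and so play no role in these determinations.
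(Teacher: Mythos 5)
Your proposal is correct and follows essentially the same route as the paper: expand $\phi(H_i)$ and $\psi(H_j)$ in the basis, substitute the known forms of $\phi(L_m)$, $\psi(L_m)$ into (\ref{LL2}) and (\ref{LL3}) to kill the $L$-parts and identify the diagonal $H$-coefficient with $\lambda$ while forcing $l_{L_m}=r_{L_m}=0$, then use (\ref{LL4}) with $[H_i,H_j]=0$ and linear independence to get $l_{H_i}=r_{H_j}=0$. The only difference is cosmetic (you treat (\ref{LL3}) before (\ref{LL2}) and isolate the degenerate index $j=0$ more explicitly than the paper does), so no further comment is needed.
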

\begin{proof}
For any $n\in \mathbb{Z}$, let
\begin{eqnarray}
\phi(H_n)=\sum_{i\in \mathbb{Z}} p_{i}^{(n)}L_i+\sum_{i\in \mathbb{Z}} q_{i}^{(n)}H_i+\gamma_n c, \label{ee2}\\
\psi(H_{n})=\sum_{i\in \mathbb{Z}} s_{i}^{(n)}L_i+\sum_{i\in \mathbb{Z}} r_{i}^{(n)}H_i+ \eta_n c, \label{ee4}
\end{eqnarray}
where $p_{i}^{(n)}, q_{i}^{(n)},  s_{i}^{(n)}, r_{i}^{(n)}, \gamma_n, \eta_n \in \mathbb{C}, i\in \mathbb{Z}$.
By the direct computations, we deduce by Lemma \ref{LMLN} that
\begin{equation}\label{philmhn}
[\phi (L_m), H_n]=(m-n)\lambda H_{m+n}+ \frac {m^3-m}{12} \lambda\delta_{m+n,0} c,
\end{equation}
\begin{equation}\label{hmpsiln}
[H_m, \psi (L_n)]=(m-n)\lambda H_{m+n}- \frac {n^3-n}{12} \lambda \delta_{m+n,0} c
\end{equation}
and by Equations (\ref{ee4}), (\ref{ee3}) that
\begin{equation}\label{lmpsihn}
[L_m, \psi (H_n)]=\sum_{i\in \mathbb{Z}} (m-i) s_{i}^{(n)}L_{m+i}+\sum_{i\in \mathbb{Z}} (m-i)r_{i}^{(n)}H_{m+i}+
\frac {m^3-m}{12}(s_{-m}^{(n)}+r_{-m}^{(n)})c,
\end{equation}
\begin{equation}\label{phihmln}
[\phi(H_m), L_n]=\sum_{i\in \mathbb{Z}}(i-n) p_{i}^{(m)}L_{n+i}+\sum_{i\in \mathbb{Z}}(i-n) q_{i}^{(m)}H_{n+i}+
\frac {n-n^3}{12}(p_{-n}^{(m)}+q_{-n}^{(m)})c.
\end{equation}
Now applying (\ref{LL2}) one has
\begin{equation}\label{flmhn}
f(L_m,H_n)=l_{L_m} H_n+[\phi(L_m),H_n]=[L_m,\psi(H_n)].
\end{equation}
 This, together with (\ref{philmhn}), (\ref{lmpsihn}), yields
$(m-i) s_{i}^{(n)}=0$ for all $m,n,i\in \mathbb{Z}$. It follows that
\begin{equation}\label{si=0}
s_{i}^{(n)}=0, \ \forall i \in \mathbb{Z}.
\end{equation}
Equations (\ref{philmhn}), (\ref{lmpsihn}) and (\ref{flmhn}) also tell us that $(m-n)r_{n}^{(n)}=(m-n)\lambda$ while $m\neq n$, which gives that
$r_{n}^{(n)}=\lambda$.  Now we again review Equations (\ref{philmhn}), (\ref{lmpsihn}), (\ref{flmhn}) and (\ref{si=0}), one has
$$
l_{L_m} H_n+\frac {m^3-m}{12} \lambda\delta_{m+n,0} c=\sum_{i\in \mathbb{Z}\setminus\{n\}} (m-i)r_{i}^{(n)}H_{m+i}+
\frac {m^3-m}{12}(s_{-m}^{(n)}+r_{-m}^{(n)})c.
$$
Compare the two hands of the above equation, we have $(m-i)r_{i}^{(n)}=0$ for any $i\in \mathbb{Z}$ with $i\neq m-n$ and $i\neq n$.  Above all, we get
$$
r_{n}^{(n)}=\lambda, \ \ r_{i}^{(n)}=0, \forall i\neq n.
$$
We also have $l_{L_m}=0$ for all $m\in \mathbb{Z}$. This, together with (\ref{si=0}) and (\ref{ee4}), implies $\psi(H_m)=\lambda H_m+ \eta_m c$.
Similarly, we have by Equations (\ref{LL3}), (\ref{ee2}), (\ref{hmpsiln}) and (\ref{phihmln}) that $r_{L_m}=0$ and
$\phi(H_m)=\lambda H_m+ \gamma_m c$ for all $m\in \mathbb{Z}$.   From this, applying (\ref{LL4}), we deduce
\begin{eqnarray*}
f(H_m, H_n)&=&l_{H_m}H_n+[\phi(H_m), H_n]=l_{H_m}H_n\\
&=&r_{H_n}H_m+[H_m, \psi(H_n)]=r_{H_n}H_m.
\end{eqnarray*}
The above equation implies $l_{H_m}=r_{H_n}=0$ for all $m,n\in \mathbb{Z}$ with $m\neq n$. This completes the proof.
\end{proof}

\begin{lemma} \label{c=0}
For any $m\in \mathbb{Z}$, we have
$f(c,x)=f(x,c)=0, \forall x\in \{L_m,I_m\}$ and $f(c,c)=0$.
\end{lemma}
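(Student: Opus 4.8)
The plan is to reduce the statement to two facts already proved: the explicit formula \eqref{LL5}, which gives $f(x,c)=dl_x c$ and $f(c,y)=dr_y c$ for all $x,y\in\{L_m,H_m\}$, and Lemma~\ref{KT}, which asserts that the scalars $l_{L_m},r_{L_m},l_{H_m},r_{H_m}$ all vanish. Substituting the latter into the former immediately yields $f(x,c)=0$ and $f(c,x)=0$ for every $x\in\{L_m,H_m\}$, so the only case not yet settled is $f(c,c)$.

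For $f(c,c)$, the idea is to realize $c$ inside a Lie bracket and then invoke the derivation axiom \eqref{1der} in the second argument. Fixing the first slot at $c$, that identity reads $f(c,[y,z])=[f(c,y),z]+[y,f(c,z)]$. I would apply it to a pair $y,z$ whose bracket carries a nonzero central coefficient; a convenient choice is $y=L_2$, $z=L_{-2}$, for which $[L_2,L_{-2}]=4L_0+\frac{1}{2}c$.

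Evaluating the two sides then isolates $f(c,c)$. On the left, bilinearity together with the vanishing $f(c,L_0)=0$ just established gives $f(c,[L_2,L_{-2}])=4f(c,L_0)+\frac{1}{2}f(c,c)=\frac{1}{2}f(c,c)$. On the right, $f(c,L_2)=f(c,L_{-2})=0$ forces $[f(c,L_2),L_{-2}]+[L_2,f(c,L_{-2})]=0$. Comparing the two expressions gives $f(c,c)=0$, which finishes the proof.

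I do not expect a genuine obstacle: granted Lemmas~\ref{phipsi}--\ref{KT}, the first part is a one-line substitution, and the second is a short bracket computation. The only point requiring a little care is the choice of bracket---one must pick indices with a genuinely nonzero central term, so $[L_1,L_{-1}]=2L_0$ (central coefficient $\frac{1^3-1}{12}=0$) will not do, and any $[L_m,L_{-m}]$ with $|m|\ge 2$ serves instead.
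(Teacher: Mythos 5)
Your proposal is correct and follows essentially the same route as the paper: both parts reduce $f(x,c)=f(c,x)=0$ to Lemma~\ref{KT} combined with \eqref{LL5}, and both handle $f(c,c)$ by exploiting the nonzero central coefficient in $[L_2,L_{-2}]$ together with the biderivation axioms (the paper expands the first argument via \eqref{2der} using $c=2[L_2,L_{-2}]-8L_0$, while you expand the second via \eqref{1der}; these are mirror-image computations). Your remark that $[L_1,L_{-1}]$ would not work because its central coefficient vanishes is a correct and worthwhile observation.
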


\begin{proof}
Lemma \ref{KT} tells us that $l_{L_m}=r_{L_m}=l_{H_m}=r_{H_m}=0$ for all $m\in \mathbb{Z}$. This, together with (\ref{LL5}), gives $f(c,x)=f(x,c)=0, \forall x\in \{L_m,I_m\}$. On the other hand, note that $c=2[L_2,L_{-2}]-8L_0$, we have
$$
f(c,c)=f(2[L_2,L_{-2}]-8L_0, c)=2[f(L_2,c), L_{-2}]+2[L_2, f(L_{-2}, c)]-8f(L_0,c)=0.
$$
The proof is completed.
\end{proof}

For convenience, we define a linear map $\omega: W(2,2)\rightarrow W(2,2)$ is given by
$\omega(L_m)=H_m, \omega (H_m)=\omega (c)=0$ for all $m\in \mathbb{Z}$, that is,
$$
\omega (\sum_{i\in \mathbb{Z}} k_i L_i+\sum_{i\in \mathbb{Z}} t_i H_i+ r c)=\sum_{i\in \mathbb{Z}} k_i H_i.
$$
For all $m,n\in \mathbb{Z}$, it is obvious that $[\omega(L_m),H_n]=[L_m, \omega(H_n)]=0$, $[\omega(H_m),L_n]=[H_m, \omega(L_n)]=0$. Note that $[\omega(L_m),L_n]=[H_m,L_n]=(m-n)H_{m+n}+\frac{n-n^3}{12}\delta_{m+n,0}c$ and $[L_m,\omega(L_n)]=[L_m,H_n]=(m-n)H_{m+n}+\frac{m^3-m}{12}\delta_{m+n,0}c$. We deduce $[\omega(L_m),L_n]-[L_m,\omega(L_n)]=\frac{n+m-(n^3+m^3)}{12}\delta_{m+n,0}c=0$, and then $[\omega(L_m),L_n]=[L_m,\omega(L_n)]$. The above conclusions along with $[\omega(x), c]=[x, \omega(c)]=[\omega (c), y]=[c, \omega (y)]=0$ for all $x,y\in W(2,2)$, yield that
\begin{equation}\label{omega}
[\omega(x), y]=[x, \omega (y)], \ \forall x,y\in W(2,2).
\end{equation}

We now state our main result as follows.

\begin{theorem}\label{maintheo}
$f$ is a biderivation of $W(2,2)$ if and only if there are two complex number $\lambda, \mu$ such that
\begin{equation}\label{fform}
f(x,y)=\lambda [x,y]+\mu[\omega (x),y], \ \ \forall x,y\in W(2,2),
\end{equation}
where $\omega$ is a linear map from $W(2,2)$ into it self defined by $\omega(L_m)=H_m, \omega (H_m)=\omega (c)=0$ for all $m\in \mathbb{Z}$.
\end{theorem}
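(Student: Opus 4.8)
The plan is to prove both implications, with the bulk of the structural work already delivered by Lemmas \ref{phipsi}--\ref{c=0}. For the forward (``only if'') direction I would simply assemble those lemmas: since $f$ is bilinear, it suffices to match $f$ against the right-hand side of (\ref{fform}) on each pair of basis vectors drawn from $\{L_m, H_m, c\}$. For a pair $(L_i, L_j)$, equation (\ref{LL1}) together with the formula $\phi(L_i) = \lambda L_i + \mu H_i + \alpha_i c$ from Lemma \ref{LMLN} gives $f(L_i, L_j) = [\phi(L_i), L_j] = \lambda[L_i, L_j] + \mu[H_i, L_j]$, because $c$ is central; and since $\omega(L_i) = H_i$ this is exactly $\lambda[L_i, L_j] + \mu[\omega(L_i), L_j]$. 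The remaining pairs are handled the same way: for $(L_i, H_j)$ use (\ref{LL2}) with $l_{L_i} = 0$ (Lemma \ref{KT}), noting $[\omega(L_i), H_j] = [H_i, H_j] = 0$; for $(H_i, L_j)$ and $(H_i, H_j)$ use (\ref{LL3}) and (\ref{LL4}) with $\phi(H_i) = \lambda H_i + \gamma_i c$ and $l_{H_i} = 0$ from Lemma \ref{KT}, observing that $\omega(H_i) = 0$ kills the $\mu$-term; and any pair involving $c$ vanishes on both sides by Lemma \ref{c=0}, the centrality of $c$, and $\omega(c) = 0$. Bilinearity then upgrades these finitely many basis identities to (\ref{fform}) for all $x, y$.

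For the backward (``if'') direction I would verify directly that any $f$ of the form (\ref{fform}) satisfies the biderivation axioms (\ref{2der}) and (\ref{1der}). Since the inner part $\lambda[x,y]$ is already known to be a biderivation, it is enough to check the summand $g(x,y) := [\omega(x), y]$. Axiom (\ref{1der}), the derivation property in the second slot, is immediate because $g(x, \cdot) = \mathrm{ad}(\omega(x))$ and every adjoint map is a derivation, so $[\omega(x), [y,z]] = [[\omega(x), y], z] + [y, [\omega(x), z]]$ by the Jacobi identity. Axiom (\ref{2der}), the derivation property in the first slot, is where the symmetry relation (\ref{omega}) does the real work: using $[\omega(u), v] = [u, \omega(v)]$ I would rewrite $g([x,y], z) = [[x,y], \omega(z)]$ and $[x, g(y,z)] + [g(x,z), y] = [x, [y, \omega(z)]] + [[x, \omega(z)], y]$, and then the Jacobi identity (again the statement that $\mathrm{ad}(x)$ is a derivation) collapses the latter expression to $[[x,y], \omega(z)]$, matching the former.

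The main obstacle is conceptual rather than computational, and it is concentrated in the first-slot axiom (\ref{2der}) for the new term $[\omega(x), y]$: the naive attempt to push the outer bracket through $\omega$ fails because $\omega$ is \emph{not} a derivation of $W(2,2)$, so one cannot argue as one does for an inner biderivation. The essential input is precisely the identity (\ref{omega}) established just before the theorem, which lets one transfer $\omega$ from the first to the second argument and thereby reduce both axioms to the Jacobi identity. Everything else --- the forward direction and the second-slot axiom --- is routine once the preceding lemmas are in hand.
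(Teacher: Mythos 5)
Your proof is correct and follows essentially the same route as the paper: the ``only if'' direction assembles Lemmas \ref{relation}--\ref{c=0} to reduce $f$ to $[\phi(x),y]$ with $\phi=\lambda\,\mathrm{id}+\mu\,\omega$ modulo central terms (you just verify this basis-pair by basis-pair rather than in one stroke), and the ``if'' direction is word-for-word the paper's computation, using (\ref{omega}) to transfer $\omega$ to the second slot and then the Jacobi identity. The only cosmetic slip is calling the basis identities ``finitely many'' --- the basis is infinite, though bilinearity still upgrades them since every element is a finite linear combination.
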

\begin{proof}
 Suppose that $f$ has the form (\ref{fform}). By Jacobin identity and (\ref{omega}), we have
$$[\omega([x,y]),z]=[[x,y],\omega(z)]=[x,[y,\omega(z)]]+[[x,\omega(z)],y]=[x,[\omega (y),z]]+[[\omega (x),z],y],$$
$$[\omega(x),[y,z]]=[[\omega(x),y],z]+[y,[\omega(x),z]],$$
for all $x,y\in W(2,2)$. From this, it is easy to verify that $f$ is a biderivation.

Conversely, suppose that $f$ is a biderivation of $W(2,2)$.  It follows by Lemmas \ref{relation}, \ref{KT} and \ref{c=0} that
\begin{equation}\label{woaini}
f(x,y)=[\phi(x),y], \ \forall x,y\in W(2,2),
\end{equation}
where $\phi$ is given by Lemma \ref{phipsi} after promising $\phi(c)=\lambda c$. Now by the conclusions of Lemmas \ref{LMLN}, \ref{KT} and \ref{c=0}, we have
\begin{eqnarray*}
\phi(L_m)=(\lambda \varepsilon_{W(2,2)}+\mu \omega) (L_m)+\alpha_m c,\\
\phi(I_m)=(\lambda \varepsilon_{W(2,2)}+\mu \omega) (H_m)+\gamma_m c,\\
\phi(c)=(\lambda \varepsilon_{W(2,2)}+\mu \omega) (c),
\end{eqnarray*}
where $\varepsilon_{W(2,2)}$ denote the identity map of $W(2,2)$. Hence, we can assume that
$$
\phi(x)=(\lambda \varepsilon_{W(2,2)}+\mu \omega) (x)+\sigma_x c, \ \forall x\in W(2,2),
$$
for some $\sigma_x\in \mathbb{C}$. So it follows from (\ref{woaini}) that $f(x,y)=[(\lambda \varepsilon_{W(2,2)}+\mu \omega) (x)+\sigma_x c,y]=\lambda [x,y]+\mu [\omega(x),y]$.
The proof is completed.
\end{proof}

\section{Biderivation of Virasoro algebra}

In this section, let $f$ be a biderivation of $Vir$. We give the following theorem.

\begin{theorem}\label{theo-virasoro}
$f$ is a derivation of $Vir$ if and only if $f$ is inner, i.e., there exist a complex number $\lambda$ such that
\begin{equation}\label{ffform}
f(x,y)=\lambda [x,y], \ \ \forall x,y\in Vir,
\end{equation}
\end{theorem}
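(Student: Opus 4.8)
The plan is to mirror the argument already carried out for $W(2,2)$, which becomes substantially shorter here because, by Lemma \ref{zhulinsheng}, $Vir$ has no outer derivations; consequently all the terms involving the outer derivation and the auxiliary map $\omega$ simply never appear, and only the inner part $\lambda[x,y]$ can survive. First I would inner-ize both slots. For a fixed $x$, equation \eqref{1der} says that $y\mapsto f(x,y)$ is a derivation of $Vir$, so by Lemma \ref{zhulinsheng} it equals ${\rm ad}\,\phi(x)$ for some $\phi(x)\in Vir$; symmetrically, equation \eqref{2der} shows that $x\mapsto f(x,z)$ is a derivation, which furnishes a map $\psi$ with $f(x,z)=[x,\psi(z)]$. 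Bilinearity of $f$ makes $\phi,\psi$ linear up to the central ambiguity of ${\rm ad}$ (which affects no bracket), and we obtain
\begin{equation*}
f(x,y)=[\phi(x),y]=[x,\psi(y)],\qquad \forall\,x,y\in Vir.
\end{equation*}

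Next I would run the coefficient comparison exactly as in Lemma \ref{LMLN}. Writing $\phi(L_n)=\sum_{i}k_i^{(n)}L_i+\alpha_n c$ and $\psi(L_n)=\sum_{i}h_i^{(n)}L_i+\beta_n c$, expanding $[\phi(L_m),L_n]$ and $[L_m,\psi(L_n)]$ and matching the coefficient of $L_{n+i}$ yields $(i-n)k_i^{(m)}=(2m-n-i)h_{n-m+i}^{(n)}$ for all $m,n,i$. Specializing $i=2m-n,\,n,\,m$ for $m\neq n$ forces $k_i^{(m)}=0$ whenever $i\neq m$, while $k_m^{(m)}=h_n^{(n)}$ is a single constant independent of $m$; calling it $\lambda$ gives $\phi(L_m)=\lambda L_m+\alpha_m c$. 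Since $c$ is central the terms $\alpha_m c$ drop out of every bracket, so that $f(L_m,L_n)=[\lambda L_m+\alpha_m c,L_n]=\lambda[L_m,L_n]$.

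It remains to handle the center. From $f(x,y)=[\phi(x),y]$ one gets $f(L_m,c)=[\phi(L_m),c]=0$ immediately. For the first slot I would use the identity $c=2[L_2,L_{-2}]-8L_0$, exactly as in Lemma \ref{c=0}, together with \eqref{2der}: expanding $f(c,L_n)=2f([L_2,L_{-2}],L_n)-8f(L_0,L_n)$ and applying \eqref{2der} to the first term, every piece has already been shown to equal $\lambda$ times the corresponding bracket, and a Jacobi-identity collapse gives $f(c,L_n)=\lambda[c,L_n]=0$; the same expansion delivers $f(c,c)=0$. Hence $f(x,y)=\lambda[x,y]$ on all of $Vir$, and the converse is immediate since $\lambda[\cdot,\cdot]$ is inner and therefore a biderivation.

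The genuinely laborious step, and the only place requiring care, is the infinite index bookkeeping in the coefficient comparison, where one must let $m,n$ range over all integers with $m\neq n$ to conclude that the single surviving diagonal coefficient is globally constant. The central element is a second, minor obstacle, resolved entirely by the bracket expression for $c$. Neither difficulty is conceptual: both are inherited, in simplified form, from the $W(2,2)$ computation, and the absence of outer derivations for $Vir$ is precisely what removes the extra $\mu$-parameter and leaves only the inner biderivations.
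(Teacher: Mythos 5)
Your proposal is correct and follows essentially the same route as the paper: reduce each slot to an inner derivation via Lemma \ref{zhulinsheng}, run the coefficient comparison of Lemma \ref{LMLN} with the $H_i$ terms absent to get $\phi(L_m)=\lambda L_m+\alpha_m c$, and then dispose of the central element by expressing $c$ as a bracket. The only (harmless) divergence is in the bookkeeping for the center: you get $f(L_m,c)=[\phi(L_m),c]=0$ directly from the inner representation, while the paper first notes $f(x,c),f(c,y)\in\mathbb{C}c$ and uses $c=2[L_2,L_{-2}]-4[L_1,L_{-1}]$ together with $L_m=\tfrac1m[L_m,L_0]$; both reach the same conclusion.
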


\begin{proof}
The ``if part'' is easy to verify. We now prove the ``only if'' part.

Firstly, by Lemma \ref{zhulinsheng} we know that every derivation of $Vir$ is inner. If we let $\phi_x(y)=f(x,y)=\psi_y(x)$ for all $x,y\in gl_n(\mathbb{C})$, then $\phi_x$ and $\psi_y$ are both derivations of $Vir$. Similar to the proof of Lemma \ref{phipsi}, there are linear maps $\phi, \psi$ from $Vir$ into itself such that
$$
f(x,y)=[\phi(x),y]=[x,\psi(y)], \ \forall x,y\in Vir.
$$
We mark $H_i=0$ for all $i\in \mathbb{Z}$ in the proof of Lemma \ref{LMLN}, and using the same way of the proof of Lemma \ref{LMLN}, it is not difficult to see that
\begin{equation}\label{abc}
\phi(L_m)=\lambda L_m + \alpha_m c, \ \
\psi(L_m)=\lambda L_m + \beta_m c, \ \forall m\in \mathbb{Z},
\end{equation}
for some $\lambda, \alpha_m, \beta_m\in \mathbb{C}$.

Nextly, we claim that $\rho(c)\in \mathbb{C}c$ for any derivation $\rho$ of $Vir$. In fact, note that $[x,c]=0$, one has $0=\rho([x,c])=[\rho(x),c]+[x,\rho(c)]=[x,\rho(c)]$ for all $x\in Vir$, the claim follows since $Z(Vir)=\mathbb{C}c$.  Thus, note that $\phi_x=f(x, \cdot)$ and $\psi_y=f(\cdot, y)$ are both derivations of $Vir$, applying the above claim we have
\begin{equation}\label{incent}
f(x,c), f(c,y)\in \mathbb{C}c, \ \ \forall x,y\in Vir.
\end{equation}
Note that $[L_2,L_{-2}]=4L_0+\frac{2^3-2}{12}c$ and $[L_1,L_{-1}]=2L_0$, it follows that $c=2[L_2,L_{-2}]-4[L_1,L_{-1}]$. We also have $L_m =\frac{1}{m}[L_m,L_0]$ for all $m\in \mathbb{Z}\setminus \{0\}.$
Thus, we have by Equations (\ref{incent}) and (\ref{2der}) that
$$
f(L_m, c)=f(\frac{1}{m}[L_m,L_0], c)=\frac{1}{m}([L_m, f(L_0, c)]+[f(L_m, c), L_0])=0,\ \ m\neq 0,
$$
$$
f(L_0,c)=f(\frac{1}{2}[L_1,L_{-1}], c)=\frac{1}{2}([L_1, f(L_{-1}, c)]+[f(L_1, c), L_{-1}])=0,
$$
\begin{eqnarray*}
&&f(c,c)=f(2[L_2,L_{-2}]-4[L_1,L_{-1}],c)\\
&=&2([L_2, f(L_{-2},c)]+[f(L_2,c),L_{-2}])-4([L_1, f(L_{-1},c)]+[f(L_1,c),L_{-1}])=0.
\end{eqnarray*}
Similarly, we have by Equations (\ref{incent}) and (\ref{1der}) that $f(c, L_m)= f(c, L_0)=0$ for all $m\neq 0$.
The above discussion tells us that $f(x,c)=f(c,x)=0$ for all $x\in Vir$. This allows us to assume that $\phi(c)=\psi(c)=\lambda c$ and so that
$f(c,x)=[\phi(c),x]$ and $f(x,c)=[x,\psi(c)]$ just establish.  Again according to (\ref{abc}), we are able to assume that
$\phi(x)=\lambda x+k_x c, \ \ \forall x\in Vir$, where $k_x\in \mathbb{C}$.

Finally,  we conclude that
$$
f(x,y)=[\phi(x),y]=[\lambda x+k_x c, y]=\lambda [x, y],
$$
for all $x,y\in Vir$. The proof is completed.
\end{proof}

\section{Applications}

In this section, we give two applications of biderivation.

\subsection{Linear commuting maps on Lie algebras}

Recall that a linear commuting map $\phi$ on a Lie algebra $L$ subject to $[\phi(x),x]=0$ for any $x\in L$. The first important result on linear (or additive ) commuting maps is Posner¡¯s theorem \cite{Pos} from 1957. Then many scholars study commuting maps on all kinds of algebra structures, Bre$\breve{s}$ar \cite{Bre3} briefly discuss various extensions of the notion of a commuting map. Recent articles about commuting maps can reference \cite{Bou,Bre2,Bre3,CWS,Chen2016,Fran,Hanw,WD1,WD2,XYi}.

Obviously, if $\phi$ on
$L$ is such a map, then $[\phi(x), y] = [x, \phi(y)]$ for any $x, y\in L$. Define $f(x,y)=[\phi(x),y]=[x,\phi(y)]$, then it is easy to check $f$ is a biderivation of $L$.

Using Theorem \ref{maintheo}, we get the following result.

\begin{theorem}
Any linear map $\phi$ on $W(2,2)$ is commuting if and only if there are complex numbers $\lambda, \mu$ and a linear function $\sigma: W(2,2)\rightarrow\mathbb{C}$ such that
$$
\phi(x)=\lambda x +\mu \omega(x)+\sigma(x) c, \ \ \forall x\in W(2,2),
$$
where $\omega$ is a linear map from $W(2,2)$ into it self defined by $\omega(L_m)=H_m, \omega (H_m)=\omega (c)=0$ for all $m\in \mathbb{Z}$.
\end{theorem}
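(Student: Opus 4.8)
The plan is to deduce this characterization directly from Theorem \ref{maintheo} by passing to the biderivation attached to a commuting map. For the \emph{if} direction I would simply substitute $\phi(x)=\lambda x+\mu\omega(x)+\sigma(x)c$ into $[\phi(x),x]$ and check it vanishes term by term: the $\lambda$-term dies because $[x,x]=0$, the $\sigma(x)$-term dies because $c$ is central, and the $\mu$-term dies because the identity (\ref{omega}), applied with $y=x$, gives $[\omega(x),x]=[x,\omega(x)]=-[\omega(x),x]$, so $[\omega(x),x]=0$ over $\mathbb{C}$. Hence $[\phi(x),x]=0$ and $\phi$ is commuting.

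For the \emph{only if} direction I would begin from the observation recorded just before the statement: a commuting $\phi$ satisfies $[\phi(x),y]=[x,\phi(y)]$ for all $x,y$, so putting $f(x,y):=[\phi(x),y]=[x,\phi(y)]$ yields a biderivation of $W(2,2)$. Theorem \ref{maintheo} then supplies $\lambda,\mu\in\mathbb{C}$ with $f(x,y)=\lambda[x,y]+\mu[\omega(x),y]=[\lambda x+\mu\omega(x),\,y]$ for all $x,y$. Rewriting this as $[\,\phi(x)-\lambda x-\mu\omega(x),\,y\,]=0$ for every $y\in W(2,2)$ shows that, for each fixed $x$, the element $\phi(x)-\lambda x-\mu\omega(x)$ lies in the center of $W(2,2)$.

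The one genuine computation I would carry out is $Z(W(2,2))=\mathbb{C}c$: writing a central element as $\sum a_iL_i+\sum b_iH_i+\gamma c$ and bracketing with $L_0$ (which scales the $L_i$ and $H_i$ components by $i$) kills every component with $i\neq 0$, and bracketing the remaining $a_0L_0+b_0H_0$ with $L_1$ then forces $a_0=b_0=0$. Granting this, I can write $\phi(x)-\lambda x-\mu\omega(x)=\sigma(x)c$ for a scalar $\sigma(x)$, i.e. $\phi(x)=\lambda x+\mu\omega(x)+\sigma(x)c$; and since $\phi$, the identity map, and $\omega$ are all linear while $c\neq 0$, the assignment $x\mapsto\sigma(x)$ is linear, giving the required functional $\sigma:W(2,2)\to\mathbb{C}$. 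I do not expect a real obstacle here, as the substance is entirely absorbed into Theorem \ref{maintheo}; the only points needing care are the center computation and the verification that $\sigma$ is linear.
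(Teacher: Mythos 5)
Your proposal is correct and follows essentially the same route as the paper: reduce to the biderivation $f(x,y)=[\phi(x),y]$, invoke Theorem \ref{maintheo}, and conclude that $\phi(x)-\lambda x-\mu\omega(x)$ is central, hence a multiple of $c$. The only difference is that you explicitly verify $Z(W(2,2))=\mathbb{C}c$ (correctly), which the paper simply asserts.
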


\begin{proof}
Note that $[\omega(x),x]=0$ for all $x\in W(2,2)$, the ``if part'' is easy to verify. We now prove the ``only if'' part.

By the above discuss we see that $f(x,y)=[\phi(x),y]$, $x,y\in W(2,2)$ is a biderivation of $W(2,2)$. It is follows by Theorem  that $[\phi(x),y]=[\lambda x +\mu \omega (x),y]$ for some $ \lambda, \mu \in \mathbb{C}$. Further, we have $[\phi(x)-\lambda x -\mu \omega (x),y]=0$ and then $\phi(x)-\lambda x -\mu \omega (x) \in Z(W(2,2))=\mathbb{C}c$. This means that there is a map $\sigma$ from $W(2,2)$ in to $\mathbb{C}$ such that
$$
\phi(x)-\lambda x -\mu \omega (x)=\sigma(x)c.
$$
It is easy to check that $\sigma$ is linear. The proof is completed.
\end{proof}

Similar to the proof of Theorem 3.5 of \cite{tang2016}, we get by Theorem  \ref{theo-virasoro} the following result.

\begin{theorem}
Any linear map $\phi$ on $Vir$ is commuting if and only if there is a complex number $\lambda$ and a linear function $\sigma: Vir \rightarrow\mathbb{C}$ such that
$$
\phi(x)=\sigma(x)c + \lambda x, \ \ \forall x\in Vir.
$$
\end{theorem}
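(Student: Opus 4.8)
The final statement to prove is the characterization of linear commuting maps on the Virasoro algebra $Vir$: any linear $\phi$ on $Vir$ is commuting if and only if $\phi(x) = \sigma(x)c + \lambda x$ for some $\lambda \in \mathbb{C}$ and linear $\sigma \colon Vir \to \mathbb{C}$.

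The plan is to reduce the problem to the biderivation classification already obtained in Theorem \ref{theo-virasoro}, mirroring the proof given for $W(2,2)$. First I would dispose of the ``if'' direction: given $\phi(x) = \sigma(x)c + \lambda x$, one computes $[\phi(x),x] = \sigma(x)[c,x] + \lambda[x,x] = 0$ using $[Vir,c]=0$ and antisymmetry of the bracket, so $\phi$ is indeed commuting. This is routine and I would state it in one line.

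For the ``only if'' direction, the key observation recorded just before the $W(2,2)$ theorem is that a commuting map satisfies $[\phi(x),y] = [x,\phi(y)]$ for all $x,y$ (polarize the identity $[\phi(x),x]=0$), so defining $f(x,y) = [\phi(x),y]$ produces a biderivation of $Vir$. Then I would invoke Theorem \ref{theo-virasoro}, which says every biderivation of $Vir$ is inner, to conclude that there is $\lambda \in \mathbb{C}$ with $[\phi(x),y] = \lambda[x,y]$ for all $x,y \in Vir$. Rearranging gives $[\phi(x) - \lambda x, y] = 0$ for every $y$, i.e. $\phi(x) - \lambda x$ lies in the center $Z(Vir) = \mathbb{C}c$. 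Hence there is a function $\sigma \colon Vir \to \mathbb{C}$ with $\phi(x) - \lambda x = \sigma(x)c$, and the linearity of $\sigma$ follows immediately from the linearity of $\phi$ and of $x \mapsto \lambda x$. This yields the desired form $\phi(x) = \sigma(x)c + \lambda x$.

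I expect no serious obstacle here, since all the hard analytic work is absorbed into Theorem \ref{theo-virasoro}; the only points requiring a little care are the passage from the commuting condition $[\phi(x),x]=0$ to the bilinear identity $[\phi(x),y]=[x,\phi(y)]$ (standard polarization, valid over a field of characteristic $0$) and the identification $Z(Vir)=\mathbb{C}c$, which is immediate from the bracket relations in the definition of $Vir$. The paper itself signals this by noting the argument is ``similar to the proof of Theorem 3.5 of \cite{tang2016}'', so the proposal is essentially to transcribe the $W(2,2)$ argument with $\omega$ set to zero.
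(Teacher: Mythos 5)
Your proposal is correct and follows exactly the route the paper intends: the paper gives no written proof for this theorem, merely citing the analogous argument (Theorem 3.5 of \cite{tang2016} and the preceding $W(2,2)$ case), and your transcription of that argument — polarize $[\phi(x),x]=0$, form the biderivation $f(x,y)=[\phi(x),y]$, apply Theorem \ref{theo-virasoro}, and use $Z(Vir)=\mathbb{C}c$ — is precisely what is meant. No gaps.
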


\begin{remark}
If we let $c=0$ in Virasoro algebra or in W-algebra $W(2,2)$, then using the same method we are able to give the forms of biderivation of Witt algebra or centerless W-algebra $W(2,2)$. The linear commuting maps on them are also described.
\end{remark}

\subsection{Post-Lie algebra}

Post-Lie algebras have been introduced by Valette in connection with the homology of partition
posets and the study of Koszul operads \cite{vela}. As \cite{Burde1} point out, post-Lie algebras are natural common generalization of pre-Lie algebras  and LR-algebras in the geometric context of nil-affine actions of Lie groups. Recently, many authors study some post-Lie algebras and post-Lie algebra structures  \cite{Burde2,Burde1,Mun,pan,tang2014}. In particular, the authors \cite{Burde1} study the commutative post-Lie algebra structure on Lie algebra, they by using the Levi decompositions proved that any commutative post-Lie algebra structure on a (finite) perfect Lie algebra is trivial.  Note that the Virasoro algebra $Vir$ and W-algebra $W(2,2)$ are both infinite dimensional perfect Lie algebras, we naturally want to know  that is also trivial for commutative post-Lie algebra structure on them? By using our Theorems \ref{theo-virasoro} and \ref{maintheo}, we answer yes to this question. Let us recall the definition of commutative post-Lie algebra as follows.

\begin{definition}\label{post}
Let $(L, [, ])$ be a complex Lie algebra. A  commutative post-Lie algebra structure on $L$ is a
$\mathbb{C}$-bilinear product $x\cdot y$ on $L$ satisfying the following identities:
\begin{eqnarray}
&& x \cdot y = y\cdot x \label{post5}\\
&& [x, y] \cdot z =  x \cdot (y \cdot z)-y \cdot (x \cdot z) \label{post6}\\
&& x\cdot [y, z] = [x\cdot y, z] + [y, x \cdot z] \label{post7}
\end{eqnarray}
for all $x, y, z \in V$. We also call $(L, [, ], \cdot)$ to be a commutative post-Lie algebra.
\end{definition}

The following lemma shows the connection of commutative post-Lie algebra and biderivation of Lie algebra, which first was pointed out.

\begin{lemma}\label{postbide}
Suppose that $(L, [, ], \cdot)$ is a commutative post-Lie algebra. If we define a bilinear map $f : L\times L \rightarrow L$ by $f(x,y)=x\cdot y$ for all $x,y\in L$, then $f$ is a biderivation of $L$.
\end{lemma}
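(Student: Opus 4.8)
The plan is to verify directly the two defining conditions of a biderivation, namely (\ref{2der}) and (\ref{1der}), for the product $f(x,y)=x\cdot y$, using only the commutative post-Lie axioms (\ref{post5}), (\ref{post6}) and (\ref{post7}). Since a biderivation is precisely a bilinear map that is a derivation in each of its two arguments, it suffices to treat the two slots separately.

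First I would dispose of (\ref{1der}), the requirement that $f$ be a derivation in its second argument. Substituting $f(x,y)=x\cdot y$, the claim $f(x,[y,z])=[f(x,y),z]+[y,f(x,z)]$ becomes $x\cdot[y,z]=[x\cdot y,z]+[y,x\cdot z]$, which is exactly axiom (\ref{post7}). Thus this half of the statement needs no computation at all.

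The remaining condition (\ref{2der}) asserts that $f$ is a derivation in its first argument, and here the commutativity axiom (\ref{post5}) does the work. I would rewrite $f([x,y],z)=[x,y]\cdot z=z\cdot[x,y]$ using (\ref{post5}), then apply (\ref{post7}) with $z$ in the first slot to obtain $z\cdot[x,y]=[z\cdot x,y]+[x,z\cdot y]$. A final application of (\ref{post5}) to each product, namely $z\cdot x=x\cdot z=f(x,z)$ and $z\cdot y=y\cdot z=f(y,z)$, turns the right-hand side into $[f(x,z),y]+[x,f(y,z)]$, which is precisely $[x,f(y,z)]+[f(x,z),y]$ as required by (\ref{2der}).

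Since both derivation identities follow from (\ref{post5}) and (\ref{post7}) alone, there is no genuine obstacle; the only point demanding care is keeping track of the variable substitutions when invoking (\ref{post7}) and commutativity in the first-argument case. Note in particular that the left post-Lie identity (\ref{post6}) is not needed for this lemma, being relevant only to the associativity-type structure rather than to the Leibniz rules that define a biderivation.
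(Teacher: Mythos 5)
Your proposal is correct and follows essentially the same route as the paper: identity (\ref{1der}) is read off directly from (\ref{post7}), and (\ref{2der}) is obtained by commuting $[x,y]\cdot z$ to $z\cdot[x,y]$ via (\ref{post5}), applying (\ref{post7}), and commuting back. The paper likewise never invokes (\ref{post6}).
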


\begin{proof}
For any $x,y,z\in L$, by (\ref{post5}) and (\ref{post7}), we deduce that
\begin{eqnarray*}
 f([x,y],z)=[x,y]\cdot z&=&z \cdot [x,y]=[z\cdot x,y]+[x, z\cdot y]\\
&=&[x\cdot z,y]+[x, y\cdot z]=[f(x,z),y]+[x,f(y,z)],\\
f(x,[y,z])=x\cdot [y,z]&=&[x\cdot y, z]+[y, x\cdot z]=[f(x,y),z]+[y,f(x,z)],
\end{eqnarray*}
which inosculates with (\ref{2der}) and (\ref{1der}), as desired.
\end{proof}

We now give the main result of this section as follows.

\begin{theorem}\label{posttheo}
Suppose that $L$ is the Virasoro algebra $Vir$ or W-algebra $W(2,2)$. Then any commutative post-Lie algebra structure on $L$ is trivial. Namely, $x\cdot y=0$ for all $x,y\in L$.
\end{theorem}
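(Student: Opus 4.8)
The plan is to prove that any commutative post-Lie algebra structure on $L$ (where $L$ is $Vir$ or $W(2,2)$) must vanish identically, by leveraging the biderivation classification already established. By Lemma \ref{postbide}, the product $f(x,y) = x\cdot y$ is a biderivation of $L$. Therefore I may immediately apply the structure theorems: if $L = Vir$, Theorem \ref{theo-virasoro} gives $f(x,y) = \lambda[x,y]$ for some $\lambda \in \mathbb{C}$; if $L = W(2,2)$, Theorem \ref{maintheo} gives $f(x,y) = \lambda[x,y] + \mu[\omega(x),y]$ for some $\lambda, \mu \in \mathbb{C}$. The strategy is then to feed each of these explicit forms back into the remaining post-Lie axioms that were not yet used to derive the biderivation property, and force all constants to be zero.

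The key leverage is the commutativity axiom (\ref{post5}), namely $x\cdot y = y\cdot x$, which the biderivation classification did not exploit. First I would treat the Virasoro case: commutativity forces $\lambda[x,y] = \lambda[y,x] = -\lambda[x,y]$, hence $2\lambda[x,y] = 0$ for all $x,y$. Since $Vir$ is not abelian (e.g. $[L_1,L_{-1}] = 2L_0 \neq 0$), this yields $\lambda = 0$, so $x\cdot y = 0$. For $W(2,2)$, commutativity gives $\lambda[x,y] + \mu[\omega(x),y] = \lambda[y,x] + \mu[\omega(y),x]$. Using the symmetry relation (\ref{omega}), which states $[\omega(x),y] = [x,\omega(y)]$, I can rewrite $[\omega(y),x] = -[x,\omega(y)] = -[\omega(x),y]$, so the right-hand side becomes $-\lambda[x,y] - \mu[\omega(x),y]$. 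Thus $2\lambda[x,y] + 2\mu[\omega(x),y] = 0$ for all $x,y \in W(2,2)$.

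From this single identity I would extract $\lambda = \mu = 0$ by evaluating on suitable basis elements. Testing $x = L_1$, $y = L_{-1}$ gives $[L_1,L_{-1}] = 2L_0$ and $[\omega(L_1),L_{-1}] = [H_1,L_{-1}] = 2H_0$, so the identity reads $4\lambda L_0 + 4\mu H_0 = 0$; since $L_0$ and $H_0$ are linearly independent basis elements, both $\lambda = 0$ and $\mu = 0$ follow at once. Hence $x\cdot y = 0$ identically in both cases.

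I do not anticipate a serious obstacle here, since the heavy lifting has already been done in the biderivation classification theorems; the only subtlety is making sure to apply the symmetry identity (\ref{omega}) correctly when transposing the $\omega$-term under commutativity, and to choose basis elements on which the bracket and its $\omega$-twist are independent so that the constants genuinely separate. It would be worth remarking that axioms (\ref{post6}) and (\ref{post7}) are automatically consistent with the vanishing solution and impose no further constraints, so commutativity alone suffices to collapse the structure.
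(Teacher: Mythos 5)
Your proposal is correct and follows essentially the same route as the paper: invoke Lemma \ref{postbide} to see the product is a biderivation, apply Theorem \ref{theo-virasoro} or Theorem \ref{maintheo}, and then use commutativity together with the identity (\ref{omega}) to force $\lambda=\mu=0$. Your explicit evaluation at $L_1,L_{-1}$ to separate the constants via linear independence of $L_0$ and $H_0$ is a small but welcome addition of detail that the paper leaves implicit.
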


\begin{proof}
(a) For Virasoro algebra $Vir$. Suppose that $(Vir, [, ], \cdot)$ is a commutative post-Lie algebra. By Lemma \ref{postbide} and Theorem  \ref{theo-virasoro}, we know that there is a complex number $\lambda$ such that $x\cdot y=\lambda [x,y]$ for all $x,y\in Vir$. Note that the product $\cdot$ is commutative, we have $\lambda [x,y]=\lambda [y,x]$, which deduces $\lambda=0$. In other words, it is trivial.

(b) For W-algebra $W(2,2)$. Suppose that $(W(2,2), [, ], \cdot)$ is a commutative post-Lie algebra. By Lemma \ref{postbide} and Theorem  \ref{maintheo}, we know that there are complex number $\lambda,\mu$ such that $x\cdot y=\lambda [x,y]+\mu[\omega(x),y]$ for all $x,y\in W(2,2)$, where $\omega$ is defined by Theorem  \ref{maintheo}. Since the product $\cdot$ is commutative, we have $\lambda [x,y]+\mu[\omega(x),y]=\lambda [y,x]+\mu[\omega(y),x]$. Note that (\ref{omega}) tells us that $[\omega(x), y]=[x, \omega (y)]$. It follows that $2(\lambda[x,y]+\mu[\omega(x),y])=0$ and so that $\lambda=\mu=0$. That is, $x\cdot y=0$ for all $x,y\in W(2,2)$.
\end{proof}

\begin{remark}
The proof of Theorem \ref{posttheo} means that we are able to characterize the commutative post-Lie algebra structure on a Lie algebra $L$ while we know the forms of biderivation of $L$.  But the precondition is that the biderivation should be assumed to be non-skewsymmetric. Thus, using a result of \cite{tang2016} on biderivations of general linear lie algebras $gl_n(\mathbb{C})$, we find there is a non-trivial commutative post-Lie algebra structure, i.e., the product is given by $x\cdot y= \mu {\rm tr}(x){\rm tr} (y) I_n$ for all $x,y\in gl_n(\mathbb{C})$, where  $I_n$ and ${\rm tr}(x)$ denote the identity matrix and trace of $x$ respectively.
\end{remark}

\section{ACKNOWLEDGMENTS}
This work is supported in part by National Natural Science Foundation of China(Grant No. 11171294), Natural Science
Foundation of Heilongjiang Province of China (Grant No. A2015007), the fund of Heilongjiang Education Committee (Grant No. 12531483).

\end{document}